\newtheorem{theorem}{Theorem}[section] 
\newtheorem{definition}[theorem]{Definition} 
\newtheorem{lemma}{Lemma}
\newtheorem{corollary}[theorem]{Corollary}
\newtheorem{example}{Example}[section]
\journal{Journal of \LaTeX\ Templates}
\begin{document}

\begin{frontmatter}



    \title{Dual Complex Adjoint Matrix: Applications in Dual Quaternion Research \tnoteref{mytitlenote}}
    \tnotetext[mytitlenote]{The author Liping Zhang  is supported by the National Natural Science Foundation of China (Grant No. 12171271).}


    \author{Yongjun Chen}
    \author{Liping Zhang\corref{mycorrespondingauthor}}
    \cortext[mycorrespondingauthor]{Corresponding author. \emph{Email address}:lipingzhang@mail.tsinghua.edu.cn}
    \address{Department of Mathematical Sciences, Tsinghua University, Beijing 100084, China}

\begin{abstract}
Dual quaternions and dual quaternion matrices have garnered widespread applications in robotic research, and its spectral theory has been extensively studied in recent years. This paper introduces the novel concept of the dual complex adjoint matrices for dual quaternion matrices. We delve into exploring the properties of this matrix, utilizing it to study eigenvalues of dual quaternion matrices and defining the concept of standard right eigenvalues. Notably, we leverage the properties of the dual complex adjoint matrix to devise a direct solution to the Hand-Eye calibration problem. Additionally, we apply this matrix to solve dual quaternion linear equations systems, thereby advancing the Rayleigh quotient iteration method for computing eigenvalues of dual quaternion Hermitian matrices, enhancing its computational efficiency. Numerical experiments have validated the correctness of our proposed method in solving the Hand-Eye calibration problem and demonstrated the effectiveness in improving the Rayleigh quotient iteration method, underscoring the promising potential of dual complex adjoint matrices in tackling dual quaternion-related challenges.
\end{abstract}



\begin{keyword}
dual complex adjoint matrix, dual quaternion matrix, eigenvalues and eigenvectors, Hand-Eye calibration problem, Rayleigh quotient iteration method 
\MSC[2022] 15A03 \sep 15A09 \sep 15A24 \sep 65F10
\end{keyword}

\end{frontmatter}



\section{Introduction}\label{Introduction}
Dual quaternion, first introduced by Clifford in 1873 \cite{biq}, have wide applications in 3D computer graphics, robotics control and computer vision \cite{c1,Cheng2016,Daniilidis1999,c4,c5}. Dual quaternion matrices occupy a pivotal role in robotic research, notably in tackling problems like simultaneous localization and mapping (SLAM) \cite{qc1,c1,Cadena2016,Grisetti2010} and kinematic modeling \cite{Qi2023}.

The Hand-Eye calibration problem stands as a pivotal issue in the realm of robot calibration, with extensive applications spanning aerospace, medical, automotive, and industrial sectors \cite{qc6,qc7}. This problem is to determine the homogeneous matrix between the camera frame and the end-efector frame or between a robot base and the world coordinate system. The $AX=XB$ and $AX=YB$ mathematical model are two main hand-eye calibration model \cite{qc9,qc10}. Recognizing that robotic poses can be elegantly represented using unit dual quaternions, Hand-Eye calibration problem can be reformulated within the framework of dual quaternions \cite{Daniilidis1999}.

The spectral theory of dual quaternion matrices has been explored in \cite{d2,Li2023}, from which we know if $\hat{\lambda}$ is a right eigenvalue of a dual quaternion matrix, then $\hat{q}^{-1}\hat{\lambda}\hat{q}$ is also a right eigenvalue of that matrix for any invertible
dual quaternion $\hat{q}$. This leads to an equivalent class $[\lambda]=\{\hat{q}^{-1}\hat{\lambda}\hat{q}|\hat{q} \in \hat{\mathbb Q}~is~invertible\}$ of the right eigenvalue $\lambda$. A natural question is whether this equivalence class has a representative element. Subsequently,
paper \cite{d3} delved into the minimax principle pertaining to the eigenvalues of dual quaternion Hermitian matrices and paper \cite{Ling2023} studied the Hoffman-Wielandt type inequality for dual quaternion Hermitian matrices by employing von Neumann type trace inequality. Furthermore, paper \cite{Li2023} emphasized the pivotal role of the eigenvalue theory of dual quaternion Hermitian matrices in analyzing the stability of multi-agent formation control systems.

In recent research \cite{Duan2023}, the Rayleigh quotient iteration method has emerged as an effective tool for computing the extreme eigenvalues of dual quaternion Hermitian matrices. An extreme eigenvalue is defined as one whose standard part's magnitude surpasses or falls below that of all other eigenvalues. Notably, this method boasts locally cubic convergence, underscoring its efficiency. The eigenvalue obtained through this method is contingent upon the initial iteration point, emphasizing the significance of selecting an appropriate starting point. Furthermore, each iteration necessitates solving a dual quaternion linear equations system, which can become computationally intensive for matrices of substantial dimensions.

The complex adjoint matrix of quaternion matrix is a valuable tool in exploring the eigenvalue theory of quaternion matrices \cite{Zhang1997}, effectively transforming eigenvalue problems of quaternion matrix into one of complex adjoint matrix. Notably, if $\tilde{\lambda}$ is a right eigenvalue of a quaternion matrix, then $\tilde{q}^{-1}\tilde{\lambda}\tilde{q}$ is also a right eigenvalue of that matrix for any invertible quaternion $\tilde{q}$. It has been established that an $n\times n$ quaternion matrix possesses exactly $n$ complex eigenvalues, referred to as its standard eigenvalues. Inspired by the complex adjoint matrix, we endeavor to define an analogous matrix which we called dual complex adjoint matrix for dual quaternion matrices, aiming to preserve the advantageous properties of the complex adjoint matrix. Our objective is to investigate the properties of the dual complex adjoint matrix and leverage it to delve into the eigenvalue theory of dual quaternion matrices, as well as address the Hand-Eye calibration problem and dual quaternion linear equations systems.

The structure of this paper is organized as follows. In Section \ref{Preliminaries}, we present an overview of the fundamental definitions and results pertaining to dual quaternions and dual quaternion matrices. In Section \ref{section3}, we introduce the dual complex adjoint matrix and highlight some useful properties. Utilizing this matrix, we delve into the study of eigenvalues of dual quaternion matrices, the Hand-eye calibration problem and the dual quaternion linear equations systems. This leads to the  improvement of the Rayleigh quotient iteration method based on the dual complex adjoint matrix, for solving the eigenvalues of dual quaternion Hermitian matrices. In Section \ref{section4}, we present an example to solve Hand-Eye calibration problem and employ the modified Rayleigh quotient iteration method to address the eigenvalue problem of the Laplacian matrix of graphs in the multi-agent formation control problem. Some final remarks are drawn in Section \ref{section5}.

\section{Preliminary}\label{Preliminaries}

In this section, we introduce some preliminary knowledge about the dual number, dual complex number, quaternion, dual quaternion, dual quaternion matrix, and the eigenvalue theory of dual quaternion matrix. 

\subsection{dual quaternion}
Denote $\mathbb{R}$, $\mathbb{C}$, $\mathbb{D}$, $\mathbb{DC}$, $\mathbb{Q}$, $\mathbb{U}$, $\hat{\mathbb{Q}}$ and $\hat{\mathbb{U}}$ as the set of real numbers, complex numbers, dual numbers, dual complex numbers, quaternions, unit quaternions, dual quaternions, and unit dual quaternions, respectively.

The symbol $\varepsilon$ is denoted as the infinitesimal unit, satisfies $\varepsilon\neq 0$ and $\varepsilon^{2}=0$.
$\varepsilon$ is commutative with complex numbers and quaternions.

\subsubsection{Dual number and dual complex number}

\begin{definition}\label{def1}
A {\bf dual complex number} $\hat{a}=a_{st}+a_{\mathcal I}\varepsilon\in \mathbb{DC}$ has standard part
$a_{st}\in \mathbb{C}$ and dual part $a_{\mathcal I}\in \mathbb{C}$. We say that $\hat{a}$ is {\bf appreciable} if $a_{st}\neq 0$. If $a_{st},a_{\mathcal I}\in \mathbb{R}$, then $\hat{a}$ is called a {\bf dual number}.
\end{definition}

The following definition lists some operators about dual complex numbers, which can be found in \cite{Qi2022}..

\begin{definition}\label{def2}
Let $\hat{a}=a_{st}+a_{\mathcal I}\varepsilon$ and $\hat{b}=b_{st}+b_{\mathcal I}\varepsilon$ be any two {\bf dual complex numbers}. The conjugate,  absolute value of $\hat{a}$, and the addition, multiplication, division between $\hat{a}$ and $\hat{b}$ are defined as follows.
\begin{itemize}
\item[{\rm (i)}] The {\bf conjugate} of $\hat{a}$ is
$\hat{a}^{\ast }=a_{st}^{\ast}+a_{\mathcal I}^{\ast }\varepsilon,$ where $a_{st}^{\ast}$ and $a_{\mathcal I}^{\ast}$ are conjugates of the complex numbers $a_{st}$ and $a_{\mathcal I}$, respectively.

\item[{\rm (ii)}] The {\bf addition} and {\bf multiplication} of $\hat{a}$ and $\hat{b}$ are
$$ \hat{a}+\hat{b}=\hat{b}+\hat{a}=\left (a_{st}+b_{st} \right )+\left (a_{\mathcal I}+b_{\mathcal I} \right )\varepsilon,$$
and
$$\hat{a}\hat{b}=\hat{b}\hat{a}=a_{st}b_{st} +\left (a_{st}b_{\mathcal I}+a_{\mathcal I}b_{st} \right )\varepsilon.$$

\item[{\rm (iii)}] When $b_{st}\ne 0$ or $a_{st}=b_{st}=0$, we can define the division operation of dual numbers as
$$\frac{\hat{a}}{\hat{b}}=\begin{cases}
\dfrac{ a_{st}}{b_{st}}+\left (\dfrac{ a_{\mathcal I}}{b_{st}}-\dfrac{ a_{st}b_{\mathcal I}}{b_{st}b_{st}} \right )\varepsilon, & {\rm if} \quad b_{st}\neq 0, \\
\dfrac{ a_{\mathcal I}}{b_{\mathcal I}}+c\varepsilon, & {\rm if} \quad a_{st}=b_{st}=0,
\end{cases}
$$
where $c$ is an arbitrary complex number.

\item[{\rm (iv)}] The {\bf absolute value}  of $\hat{a}$ is
$$\left | \hat{a}\right |=\begin{cases}
\left | a_{st}\right |+{\rm sgn}( a_{st}) a_{\mathcal I}\varepsilon, & {\rm if} \quad a_{st}\neq 0, \\
\left | a_{\mathcal I} \right |\varepsilon, & \text{\rm otherwise.}
\end{cases}$$

\end{itemize}
\end{definition}

The order operation for dual numbers was defined in \cite{Cui2023}.

\begin{definition}\label{def3}
Let $\hat{a}=a_{st}+a_{\mathcal I}\varepsilon$, $\hat{b}=b_{st}+b_{\mathcal I}\varepsilon$ be any two {\bf dual numbers}. We give the order operation between $\hat{a}$ and $\hat{b}$. We say that $\hat{a} > \hat{b}$ if
$$\text{$a_{st} > b_{st}$ ~~~{\rm or}~~~ $a_{st}=b_{st}$ {\rm and} $a_{\mathcal I} > b_{\mathcal I}$}.$$
\end{definition}

\subsubsection{quaternion}

\begin{definition}\label{def4}
A {\bf quaternion} can be represented as $\tilde{q}=q_{0}+q_{1} i+q_{2} j+q_{3} k$, where $q_{0},q_{1},q_{2},q_{3}$ are real numbers, 
$i, j, k$ satisfy $ij=k$, $jk=i$,$ki=j$ and $i^2=j^2= k^2=-1$. 
$\tilde{q}$ can also be represented as $\tilde{q}=\left [q_{0}, q_{1},q_{2},q_{3}\right ]$, which is a real four-dimensional vector. We can rewrite $\tilde{q}=[ q_{0},\vec{q} ]$, where $\vec{q}$ is a real three-dimensional vector \cite{Cheng2016,Daniilidis1999,Wei2013}.
\end{definition}

Denote $\tilde{1}=\left [1,0,0,0\right ]\in \mathbb{Q}$ and $\tilde{0}=\left [0,0,0,0\right ]\in \mathbb{Q}$ as the identity element and zero element of the quaternion set $\mathbb {Q}$.

The following definition lists some operators about quaternions.
\begin{definition}\label{def5}
Let $\tilde{p}=\left [p_{0}, \vec{p}\right ]$ and $\tilde{q}=\left [q_{0}, \vec{q}\right ]$  be any two {\bf quaternions}. The conjugate, magnitude, and inverse of $\tilde{p}$, and the addition, multiplication between $\tilde{q}$ and $\tilde{p}$ are defined as follows.
\begin{itemize}
\item[{\rm (i)}] The conjugate of $\tilde{p}$ is $\tilde{p}^{\ast}=\left [p_{0}, -\vec{p}\right ]$.
\item[{\rm (ii)}] The magnitude of $\tilde{p}$ is 
$\left |\tilde{p}\right|=\sqrt{p_{0}^{2}+\left \| \vec{p} \right \|_2^2}$.
\item[{\rm (iii)}] The addition and multiplicity of $\tilde{q}$ and $\tilde{p}$ are 
\begin{equation}
\tilde{p}+\tilde{q}=\tilde{q}+\tilde{p}=\left [p_{0}+q_{0},\vec{p}+\vec{q}\right ]   
\end{equation}
and 
\begin{equation}
\tilde{p}\tilde{q}=\left [p_{0}q_{0}-\vec{p}\cdot \vec{q}, p_{0}\vec{q}+q_{0}\vec{p}+\vec{p}\times \vec{q}\right ].
\end{equation}
\item[{\rm (iv)}]
If $\tilde{q}\tilde{p}=\tilde{p}\tilde{q}=\tilde{1}$, then we call the quaternion $\tilde{p}$ is invertible and the inverse of $\tilde{p}$ is $\tilde{p}^{-1}=\tilde{q}$.
\end{itemize}
\end{definition}

\begin{definition}\label{def6}
We say that $\tilde{p}\in \mathbb{Q}$ is a {\bf unit quaternion} if $\left |\tilde{p} \right |=1$.
Clearly, if $\tilde{p}$ and $\tilde{q}$ are unit quaternions, i.e., $\tilde{p},\tilde{q}\in \mathbb{U}$, then $\tilde{p}\tilde{q}\in \mathbb{U}$. Furthermore, we have $\tilde{p}^{\ast }\tilde{p}=\tilde{p}\tilde{p}^{\ast }=\tilde{1}$, i.e., $\tilde{p}$ is invertible and $\tilde{p}^{-1}=\tilde{p}^{\ast }$. Generally, if quaternion $\tilde{p}\ne \tilde{0}$, we have  $\frac{\tilde{p}^{\ast }}{|\tilde{p}|^2}\tilde{p}=\tilde{p}\frac{\tilde{p}^{\ast }}{|\tilde{p}|^2}=\tilde{1}$, then $\tilde{p}^{-1}=\frac{\tilde{p}^{\ast }}{|\tilde{p}|^2}$.
\end{definition}

\subsubsection{dual quaternion}
\begin{definition}\label{defss}
A {\bf dual quaternion} $\hat{p}=\tilde{p}_{st}+\tilde{p}_{\mathcal I}\varepsilon\in \hat{\mathbb{Q}}$ has standard part
$\tilde{p}_{st} \in \mathbb{Q}$ and dual part $\tilde{p}_{\mathcal I} \in \mathbb{Q}$. We say that $\hat{q}$ is {\bf appreciable} if $\tilde{p}_{st}\neq \tilde{0}$. 
\end{definition}

Denote $\hat{1}=\tilde{1}+\tilde{0}\varepsilon\in \hat{\mathbb{Q}}$
and $\tilde{0}=\tilde{0}+\tilde{0}\varepsilon\in \hat{\mathbb{Q}}$
as the identity element and zero element of the dual quaternion set $\hat{\mathbb {Q}}$.

The following definition lists some operators about dual quaternions.

\begin{definition}\label{def8}
Let $\hat{p}=\tilde{p}_{st}+\tilde{p}_{\mathcal I}\varepsilon$ and $\hat{q}=\tilde{q}_{st}+\tilde{q}_{\mathcal I}\varepsilon$ be {\bf dual quaternions}. The conjugation, absolute value and magnitude of $\hat{p}$ and the addition, multiplicity and division of $\hat{p}$ and $\hat{q}$ are defined as follows.
\begin{itemize}
\item[{\rm (i)}] The conjugate of $\hat{p}$ is 
$\hat{p}^{\ast }=\tilde{p}_{st}^{\ast}+\tilde{p}_{\mathcal I}^{\ast }\varepsilon$, where $\tilde{p}_{st}^{\ast}$ and $\tilde{p}_{\mathcal I}^{\ast}$ are the conjugate of quaternions.
\item[{\rm (ii)}] The absolute value \cite{Qi2022} of $\hat{p}$ is 
\begin{equation}
\left | \hat{p}\right |=\begin{cases}
\left | \tilde{p}_{st}\right |+\frac{sc(\tilde{p}_{st}^{\ast }\tilde{p}_{\mathcal I})}{\left |\tilde{p}_{st} \right |}\varepsilon, & \text{if} \quad \tilde{p}_{st}\neq \tilde{0}, \\
\left | \tilde{p}_{\mathcal I} \right |\varepsilon, & \text{if} \quad \tilde{p}_{st}= \tilde{0},
\end{cases}
\end{equation}
where $sc(\tilde{p})=\frac{1}{2}(\tilde{p}+\tilde{p}^{\ast})$.
We say that $\hat{p}$ is a {\bf unit dual quaternion} if $\left |\hat{p} \right |=1$.
\item[{\rm (iii)}] The magnitude of $\hat{p}$ is
\begin{equation}
\left | \hat{p}\right |_2=\sqrt{|\tilde{p}_{st}|^2+|\tilde{p}_{\mathcal I}|^2}.
\end{equation}
\item[{\rm (iv)}] The addition and multiplicity of $\hat{p}$ and $\hat{q}$ are 
\begin{equation}
\hat{p}+\hat{q}=\hat{q}+\hat{p}=\left (\tilde{p}_{st}+\tilde{q}_{st} \right )+\left (\tilde{p}_{\mathcal I}+\tilde{q}_{\mathcal I} \right )\varepsilon,
\end{equation}
and 
\begin{equation}
\hat{p}\hat{q}=\tilde{p}_{st}\tilde{q}_{st} +\left (\tilde{p}_{st}\tilde{q}_{\mathcal I}+\tilde{p}_{\mathcal I}\tilde{q}_{st} \right )\varepsilon.
\end{equation}
\item[{\rm (v)}] If $\tilde{q}_{st}\ne \tilde{0}$ or  $\tilde{q}_{st}=\tilde{q}_{st}=\tilde{0}$, we can define the division of $\hat{p}$ and $\hat{q}$ as
\begin{equation}
\frac{ \tilde{p}_{st}+ \tilde{p}_{\mathcal I}\varepsilon}{\tilde{q}_{st}+ \tilde{q}_{\mathcal I}\varepsilon}=\begin{cases}
\dfrac{ \tilde{p}_{st}}{\tilde{q}_{st}}+\left (\dfrac{ \tilde{p}_{\mathcal I}}{\tilde{q}_{st}}-\dfrac{ \tilde{p}_{st}\tilde{q}_{\mathcal I}}{\tilde{q}_{st}\tilde{q}_{st}} \right )\varepsilon, & \text{if} \quad \tilde{q}_{st}\neq \tilde{0}, \\
\dfrac{ \tilde{p}_{\mathcal I}}{\tilde{q}_{\mathcal I}}+\tilde{c}\varepsilon, & \text{if} \quad \tilde{p}_{st}=\tilde{q}_{st}=\tilde{0},
\end{cases}
\end{equation}
where $\tilde{c}$ is an arbitrary quaternion.
\item[{\rm (vi)}] If $\hat{q}\hat{p}=\hat{p}\hat{q}=\hat{1}$, then $\hat{p}$ is called invertible and the inverse of $\hat{p}$ is $\hat{p}^{-1}=\hat{q}$.
If $\hat{p}\in \hat{\mathbb{U}}$, then $\hat{p}^{\ast }\hat{p}=\hat{p}\hat{p}^{\ast }=\hat{1}$, i.e., $\hat{p}$ is invertible and $\hat{p}^{-1}=\hat{p}^{\ast}$. Generally, if $\hat{p}$ is appreciable, we have $\hat{p}^{-1}=\tilde{p}^{-1}_{st}-\tilde{p}^{-1}_{st}\tilde{p}_{\mathcal I}\tilde{p}^{-1}_{st}\varepsilon$.
\end{itemize}
\end{definition}


\subsection{Dual quaternion matrix and eigenvalues of dual quaternion matrix}
The sets of dual number matrix, dual complex matrix, quaternion matrix, and dual quaternion matrix with dimension $n\times m$  are denoted as
$\mathbb{D}^{n\times m}$, $\mathbb{DC}^{n\times m}$, $\mathbb{Q}^{n\times m}$, and $\hat{\mathbb{Q}}^{n\times m}$.
Denote $O^{n\times m}$, $\hat{O}^{n\times m}$, $\tilde{\mathbf{O}}^{n\times m}$ and  $\hat{\mathbf{O}}^{n\times m}$ as the zero element of the set of complex matrices, dual complex matrices, quaternion matrices and dual quaternion matrices with dimension $n\times m$, respectively.
Denote $\hat{I}_n$, $\tilde{\mathbf{I}}_n$ and $\hat{\mathbf{I}}_n$ as the identity element of the set of dual complex matrices, quaternion matrices and dual quaternion matrices with dimension $n\times n$, respectively.

A quaternion matrix $\tilde{\mathbf Q}\in \mathbb Q^{m\times n}$ can be 
expressed as $\tilde{\mathbf Q}= Q_1+ Q_2 i+ Q_3 
j+ Q_4 k\in \mathbb Q^{m\times n}$, where $ Q_1, Q_2, Q_3, Q_4\in \mathbb R^{m\times n}$.
Denote $P_1= Q_1+Q_2 i$ and $P_2= Q_3+ Q_4 i$, then we can rewrite $\tilde{\mathbf Q}$ as 
$\tilde{\mathbf Q}= P_1+ P_2 j$.
The $F$-norm of quaternion matrix $\tilde{\mathbf{Q}}=(\tilde{q}_{ij}) \in \tilde{\mathbb{Q}}^{m\times n}$ is defined as
$\| \tilde{\mathbf{Q}}\| _F=\sqrt{\sum_{ij}^{} |\tilde{q}_{ij}|^2}$,
and the magnitude of the quaternion vector $\tilde{\mathbf{x}}=(\tilde{x}_{i}) \in \tilde{\mathbb{Q}}^{n\times 1}$ is defined as
$\left \| \tilde{\mathbf{x}} \right \|=\sqrt{\sum_{i=1}^{n}|\tilde{x}_{i}|^2}.$

A dual complex matrix $\hat{P}=P_{st}+P_{\mathcal I}\varepsilon \in \mathbb{DC}^{n\times m}$ has standard part $P_{st}\in \mathbb C^{n\times m}$ and dual part $P_{\mathcal I}\in \mathbb C^{n\times m}$. If $P_{st}\neq O$, then $\hat{P}$ is called appreciable. The transpose and the conjugate of $\hat{P}=(\hat{p}_{ij})$ are $\hat{P}^{T}=(\hat{p}_{ji})$ and $\hat{P}^{\ast}=(\hat{p}_{ji}^{\ast })$, respectively. If $\hat{P}^{\ast}=\hat{P}$, then $\hat{P}$ is a dual complex Hermitian matrix. A dual complex matrix $\hat{U}\in \mathbb{DC}^{n\times n}$ is a unitary matrix, if $\hat{U}^\ast\hat{U}=\hat{U}\hat{U}^\ast=\hat{I}_n$.

\subsubsection{Dual quaternion matrix}
A dual quaternion matrix $\hat{\mathbf Q}=\tilde{\mathbf Q}_{st}+\tilde{\mathbf Q}_{\mathcal I}\varepsilon \in \hat{\mathbb{Q}}^{m\times n}$ has standard part  $\tilde{\mathbf Q}_{st}\in \mathbb Q^{m\times n}$ and dual part $\tilde{\mathbf Q}_{\mathcal I}\in \mathbb Q^{m\times n}$. If $\tilde{\mathbf Q}_{st}\neq \tilde{\mathbf O}$, then $\hat{\mathbf Q}$ is called appreciable. The transpose and conjugate of $\hat{\mathbf{Q}}=(\hat{q}_{ij})$ are 
$\hat{\mathbf{Q}}^{T}=(\hat{q}_{ji})$ and $\hat{\mathbf{Q}}^{\ast}=(\hat{q}_{ji}^{\ast })$, respectively. $\hat{\mathbf{U}}\in \hat{\mathbb{Q}}^{n\times n}$ is a unitary dual quaternion matrix, if $\hat{\mathbf{U}}^\ast\hat{\mathbf{U}}=\hat{\mathbf{U}}\hat{\mathbf{U}}^\ast=\hat{\mathbf{I}}_n$. The set of unitary dual quaternion matrix with dimension $n$ is denoted as $\hat{\mathbb{U}}^n_2$. If $\hat{\mathbf{Q}}^{\ast}=\hat{\mathbf{Q}}$, then $\hat{\mathbf{Q}}$ is called a dual quaternion Hermitian matrix. The set of dual quaternion Hermitian matrix with dimension $n$ is denoted as $\hat{\mathbb{H}}^n$.

The following definition gives $2$-norm, $2^{R}$-norm for dual quaternion vectors and $F$-norm, $F^{R}$-norm for dual quaternion matrices. See \cite{Cui2023,Ling2023,Qi2022}.
\begin{definition}\label{def10}
Let $\hat{\mathbf{x}}=\tilde{\mathbf{x}}_{st}+\tilde{\mathbf{x}}_{\mathcal{I}}\varepsilon=(\hat{x}_i) \in \hat{\mathbb{Q}}^{n\times 1}$ and $\hat{\mathbf{Q}}=\tilde{\mathbf{Q}}_{st}+\tilde{\mathbf{Q}}_{\mathcal{I}}\varepsilon \in \hat{\mathbb{Q}}^{m\times n}$.
The $2$-norm and $2^{R}$-norm for dual quaternion vectors are respectively defined by
\begin{equation}
\| \hat{\mathbf x}\|_{2}=\begin{cases}
\sqrt{\sum_{i=1}^{n}|\hat{x}_{i} |^{2}}, & \text{if} \quad \tilde{\mathbf x }_{st}\neq \tilde{\mathbf O }, \\
\|\tilde{\mathbf{x}}_{\mathcal{I}}\|\varepsilon, & \text{if} \quad \tilde{\mathbf x }_{st}=\tilde{\mathbf O },
\end{cases}  
\end{equation}
and 
\begin{equation}
\| \hat{\mathbf x}\|_{2^{R}}=\sqrt{\| \tilde{\mathbf x}_{st}  \|^{2}+\| \tilde{\mathbf x }_{\mathcal I} \|^{2}}.
\end{equation}
The set of $n\times 1$ dual quaternion vectors with unit $2$-norm is denoted as $\hat{\mathbb{Q}}_2^{n\times 1}$.

The $F$-norm and $F^{R}$-norm for dual quaternion matrices are defined by
\begin{equation}
\| \hat{\mathbf Q} \|_{F}=\begin{cases}
\| \tilde{\mathbf Q}_{st} \|_{F}+\frac{sc (tr(\tilde{\mathbf Q}_{st}^{\ast }\tilde{\mathbf Q}_{\mathcal I}))}{\|  \tilde{\mathbf Q}_{st} \|_{F}}\varepsilon, & \text{if} \quad \tilde{\mathbf Q}_{st}\neq \tilde{\mathbf O }, \\
 \| \tilde{\mathbf Q}_{\mathcal I} \|_{F}\varepsilon, & \text{if} \quad \tilde{\mathbf Q}_{st}=\tilde{\mathbf O },
\end{cases} 
\end{equation}
and 
\begin{equation}
 \| \hat{\mathbf Q} \|_{F^{R}}=\sqrt{ \| \tilde{\mathbf Q}_{st} \|_{F}^{2}+\| \tilde{\mathbf Q}_{\mathcal I}\|_{F}^{2}}.
\end{equation}
\end{definition}
\subsubsection{Eigenvalues of dual quaternion matrices}
The following definition introduces eigenvalues and eigenvectors of dual quaternion matrices \cite{Li2023}.
\begin{definition}\label{def12}
Let $\hat{\mathbf Q}\in \hat{\mathbb{Q}}^{n\times n}$.
If there exist $\hat{\lambda} \in \hat{\mathbb{Q}}$ and $\hat{\mathbf x}\in \hat{\mathbb{Q}}^{n\times 1}$, where $\hat{\mathbf x}$ is appreciable, such that
\begin{equation}
    \hat{\mathbf Q}\hat{\mathbf x}=\hat{\mathbf x}\hat{\lambda},
\end{equation}
then we call $\hat{\lambda}$ is a {\bf right eigenvalue} of $\hat{\mathbf Q}$ with $\hat{\mathbf x}$ as an associated {\bf right eigenvector}.

If there exist $\hat{\lambda} \in \hat{\mathbb{Q}}$ and $\hat{\mathbf x}\in \hat{\mathbb{Q}}^{n\times 1}$, where $\hat{\mathbf x}$ is appreciable, such that
\begin{equation}
    \hat{\mathbf Q}\hat{\mathbf x}=\hat{\lambda}\hat{\mathbf x},
\end{equation}
then we call $\hat{\lambda}$ is a {\bf left eigenvalue} of $\hat{\mathbf Q}$ with $\hat{\mathbf x}$ as an associated {\bf left eigenvector}.

Since a dual number is commutative with a dual quaternion vector, then if $\hat{\lambda}$ is a dual number and a right eigenvalue of $\hat{\mathbf{Q}}$, it is also a left eigenvalue of $\hat{\mathbf{Q}}$.  In this case, we simply call $\hat{\lambda}$ an  {\bf eigenvalue} of $\hat{\mathbf{Q}}$ with $\hat{\mathbf{x}}$ as an associated {\bf eigenvector}.
\end{definition}

A dual quaternion Hermitian matrix $\hat{\mathbf{Q}}\in\hat{\mathbb{H}}^n$ has exactly $n$ eigenvalues, which are all dual numbers. Similar to the case of Hermitian matrix, we have unitary decomposition of $\hat{\mathbf{Q}}$, namely, there exist a unitary dual quaternion matrix $\hat{\mathbf{U}}\in \hat{\mathbb{U}}^{n}_2$ and a diagonal dual number matrix $\hat{\Sigma} \in \mathbb D^{n\times n}$ such that  $\hat{\mathbf{Q}}=\hat{\mathbf{U}}^*\hat{\Sigma}\hat{\mathbf{U}}$ \cite{Li2023}.

Duan et al \cite{Duan2023} proposed the Rayleigh quotient iteration method for calculating the extreme eigenvalues of dual quaternion Hermitian matrices. Given any $\hat{\mathbf{Q}}\in\hat{\mathbb{H}}^n$ and initial iterative vector $\hat{\mathbf{v}}^{(0)}\in \hat{\mathbb{Q}}_2^{n\times 1}$, the $k$-th iteration of Rayleigh quotient iteration method is
\begin{equation}
\hat{\lambda}^{(k-1)}=(\hat{\mathbf{v}}^{(k-1)})^*\hat{\mathbf{Q}}\hat{\mathbf{v}}^{(k-1)},~\text{Solve}~ (\hat{\mathbf{Q}}-\hat{\lambda}^{(k-1)}\hat{\mathbf{I}})\hat{\mathbf{u}}^{(k)}=
\hat{\mathbf{v}}^{(k-1)},\hat{\mathbf{v}}^{(k)}=
\frac{\hat{\mathbf{u}}^{(k)}}{\|\hat{\mathbf{u}}^{(k)}\|_2}.
\end{equation}

$\hat{\lambda}_1$ is called the extreme eigenvalue of $\hat{\mathbf Q}$, if 
\begin{equation}
|\lambda_{1,\mathrm{st}}|>|\lambda_{2,\mathrm{st}}|\geq\cdots\geq|\lambda_{n,\mathrm{st}}|\geq0
\end{equation} or 
\begin{equation}
0 \le |\lambda_{1,\mathrm{st}}|<|\lambda_{2,\mathrm{st}}|\le \cdots\le |\lambda_{n,\mathrm{st}}|,
\end{equation}
where $\{\hat{\lambda}_i=\lambda_{i,st }+ \lambda_{i, \mathcal I}\varepsilon\}_{i=1}^n$ are eigenvalues of $\hat{\mathbf Q}$.

An important step in the Rayleigh quotient iteration method is to solve a dual quaternion linear equations system. Suppose that $\hat{\mathbf{Q}}=\hat{Q}_0+\hat{Q}_1i+\hat{Q}_2j+\hat{Q}_3k\in \hat{\mathbb{Q}}^{m\times n}$, where $\hat{Q}_t\in \mathbb{D}^{m\times n},t=0,1,2,3$, then the dual representation of $\hat{\mathbf{Q}}$ is 
\begin{equation}
\hat{\mathbf{Q}}^D=\begin{bmatrix}
\hat{Q}_0  & \hat{Q}_1 & \hat{Q}_2 & \hat{Q}_3\\
-\hat{Q}_1  & \hat{Q}_0 &- \hat{Q}_3 & \hat{Q}_2\\
-\hat{Q}_2  & \hat{Q}_3 & \hat{Q}_0 &- \hat{Q}_1\\
-\hat{Q}_3  & -\hat{Q}_2 & \hat{Q}_1 & \hat{Q}_0
\end{bmatrix}.
\end{equation}
Denote the first column of $\hat{\mathbf{Q}}^D$ as $\hat{\mathbf{Q}}^D_c=[\hat{Q}_0^T,-\hat{Q}_1^T,-\hat{Q}_2^T,-\hat{Q}_3^T]^T$. 
Paper \cite{Duan2023} proved that solving a dual quaternion linear equations system
$\hat{\mathbf{Q}}\hat{\mathbf{x}}=\hat{\mathbf{y}}$ is equivalent to solving
$\hat{\mathbf{Q}}^D\hat{\mathbf{x}}^D_c=\hat{\mathbf{y}}^D_c$,
which is a dual number linear equations system.
Suppose that $\hat{\mathbf{Q}}^D=Q_1+Q_2\varepsilon$, $\hat{\mathbf{x}}^D_c=\mathbf{x}_1+\mathbf{x}_2\varepsilon$, $\hat{\mathbf{y}}^D_c=\mathbf{y}_1+\mathbf{y}_2\varepsilon$,
then solving $\hat{\mathbf{Q}}^D\hat{\mathbf{x}}^D_c=\hat{\mathbf{y}}^D_c$
is equivalent to solving a real linear equations system
$$\left\{  
\begin{aligned}  
&Q_1\mathbf{x}_1=\mathbf{y}_1, \\  
&Q_1\mathbf{x}_2+Q_2\mathbf{x}_1=\mathbf{y}_2.
\end{aligned}  
\right.$$

The Rayleigh quotient iteration method is organized as follows, see Algorithm \ref{RQI}. It has a local cubic convergence rate for solving the extreme eigenvalues of dual quaternion Hermitian matrices. 
\begin{algorithm}
    \caption{The Rayleigh quotient iteration method for calculating the extreme eigenvalues of dual quaternion Hermitian matrices} \label{RQI}
    \begin{algorithmic}
    \REQUIRE $\hat{\mathbf Q}\in \hat{\mathbb{H}}^n$, initial iteration point $\hat{\mathbf v}^{(0)}\in\hat{\mathbb{Q}}_2^{n\times 1}$, the maximum iteration number $k_{max}$ and the tolerance $\delta$.
    \ENSURE $\hat{\mathbf v}^{(k)}$ and $\hat{\lambda}^{(k-1)}$.
    \FOR {$k = 1$ to $k_{max}$}   
    \STATE Update $\hat{\lambda}^{(k-1)}=(\hat{\mathbf{v}}^{(k-1)})^*\hat{\mathbf{Q}}\hat{\mathbf{v}}^{(k-1)}$.
    \STATE SolveUpdate $(\hat{\mathbf{Q}}-\hat{\lambda}^{(k-1)}\hat{\mathbf{I}})\hat{\mathbf{u}}^{(k)}=
    \hat{\mathbf{v}}^{(k-1)}$.
    \STATE Update $\hat{\mathbf{v}}^{(k)}=
    \frac{\hat{\mathbf{u}}^{(k)}}{\|\hat{\mathbf{u}}^{(k)}\|_2}$.
    \STATE If $\| \hat{\mathbf y}^{(k)}-\hat{\mathbf v}^{(k-1)}\hat{\lambda}^{(k-1)}\| _{2^R}\le \delta \times \| \hat{\mathbf{Q}}\|_{F^R}$, then Stop.
    \ENDFOR
    \end{algorithmic}
\end{algorithm}

\section{Dual Complex Adjoint Matrix and its Applications}\label{section3}
In this section, we introduce the dual complex adjoint matrix and some useful properties. Leveraging this matrix, we delve into the study of eigenvalues of dual quaternion matrices, the Hand-eye calibration problem and the dual quaternion linear equations systems, which leads to the improvement of the Rayleigh quotient iteration method.

\subsection{Dual complex adjoint matrix}

Let $\tilde{\mathbf Q}= Q_1+ Q_2 i+  Q_3 j+ Q_4  k\in \mathbb Q^{m\times n}$, the complex adjoint matrix \cite{Zhang1997} of quaternion matrix $\tilde{\mathbf Q}$ is defined by
\begin{equation}
J(\tilde{\mathbf Q})=\begin{bmatrix}
  P_1 &   P_2\\
-\overline{ P_2}   & \overline{ P_1} 
\end{bmatrix},
\end{equation}
where $P_1=Q_1+Q_2 i$ and $ P_2= Q_3+ Q_4 i$ are complex matrix. Let $A=(a_{ij})\in \mathbb C^{m\times n}$, then $\overline{A}=(\overline{a_{ij}})$, i.e., take the conjugate of every element of complex matrix $A$.

The complex adjoint matrix has many 
important properties in studying the matrix theory of quaternion matrix \cite{Zhang1997}. One of them is shown in Lemma \ref{lemma3.1}.
\begin{lemma}\label{lemma3.1}
Let $\tilde{\mathbf A}= A_1+ A_2 j\in \mathbb Q^{n\times n}$, $\tilde{\mathbf x}=\mathbf x_1+\mathbf x_2  j\in \mathbb Q^{n\times 1}$, $\lambda\in \mathbb C$, where $A_1, A_2\in\mathbb C^{n\times n}$, $\mathbf x_1,\mathbf  x_2\in\mathbb C^{n\times 1}$, then 
\begin{equation}
\tilde{\mathbf A}\tilde{\mathbf x}=\tilde{\mathbf x}\lambda
\end{equation} is equivalent to 
\begin{equation}
\begin{bmatrix}
 A_1 & A_2\\
-\overline{ A_2}   & \overline{ A_1} 
\end{bmatrix}
\begin{bmatrix}
\mathbf x_1\\
-\overline{ \mathbf x_2}
\end{bmatrix}=
\lambda\begin{bmatrix}
\mathbf x_1\\
-\overline{ \mathbf x_2}
\end{bmatrix}
~or~
\begin{bmatrix}
 A_1 & A_2\\
-\overline{ A_2}   & \overline{A_1} 
\end{bmatrix}
\begin{bmatrix}
\mathbf x_2\\
\overline{\mathbf x_1}
\end{bmatrix}=
\overline{\lambda}\begin{bmatrix}
\mathbf x_2\\
\overline{\mathbf x_1}
\end{bmatrix}.
\end{equation}
\end{lemma}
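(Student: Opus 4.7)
The plan is to unpack both sides of $\tilde{\mathbf A}\tilde{\mathbf x}=\tilde{\mathbf x}\lambda$ using the decomposition $q=a+bj$ with complex $a,b$, then compare coefficients. The key identity I would use repeatedly is $j\alpha=\overline{\alpha}j$ for any $\alpha\in\mathbb C$, together with $j^2=-1$, so that for any complex matrix $M$ and complex vector $\mathbf v$, we have $Mj\mathbf v=M\overline{\mathbf v}j$ and $(Aj)(Bj)=-A\overline{B}$. Applying this to $(A_1+A_2 j)(\mathbf x_1+\mathbf x_2 j)$ yields
\begin{equation*}
\tilde{\mathbf A}\tilde{\mathbf x}=\bigl(A_1\mathbf x_1-A_2\overline{\mathbf x_2}\bigr)+\bigl(A_1\mathbf x_2+A_2\overline{\mathbf x_1}\bigr)j,
\end{equation*}
while $\tilde{\mathbf x}\lambda=\mathbf x_1\lambda+\mathbf x_2(j\lambda)=\lambda\mathbf x_1+\overline{\lambda}\mathbf x_2\, j$.

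Equating the complex and $j$-parts gives the pair
\begin{equation*}
A_1\mathbf x_1-A_2\overline{\mathbf x_2}=\lambda\mathbf x_1,\qquad A_1\mathbf x_2+A_2\overline{\mathbf x_1}=\overline{\lambda}\,\mathbf x_2.
\end{equation*}
The first equation is literally the top block row of $J(\tilde{\mathbf A})\bigl[\mathbf x_1;\,-\overline{\mathbf x_2}\bigr]^T=\lambda\bigl[\mathbf x_1;\,-\overline{\mathbf x_2}\bigr]^T$. For the bottom block row, I would take the complex conjugate of the second equation, obtaining $\overline{A_1}\,\overline{\mathbf x_2}+\overline{A_2}\mathbf x_1=\lambda\,\overline{\mathbf x_2}$, and then rewrite it as $-\overline{A_2}\mathbf x_1+\overline{A_1}(-\overline{\mathbf x_2})=\lambda(-\overline{\mathbf x_2})$, which is precisely the needed second block row. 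This establishes the equivalence with the first matrix form, and the chain of manipulations is reversible (conjugation is an involution), giving the converse.

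For the second matrix form, my strategy is to exploit the right-module structure: if $\tilde{\mathbf A}\tilde{\mathbf x}=\tilde{\mathbf x}\lambda$, then right-multiplying by $j$ and using $\lambda j=j\overline{\lambda}$ yields $\tilde{\mathbf A}(\tilde{\mathbf x}j)=(\tilde{\mathbf x}j)\overline{\lambda}$. Writing $\tilde{\mathbf x}j=-\mathbf x_2+\mathbf x_1 j$, i.e.\ $\mathbf y_1=-\mathbf x_2$ and $\mathbf y_2=\mathbf x_1$, and applying the first equivalence with $(\mathbf y_1,\mathbf y_2,\overline{\lambda})$ in place of $(\mathbf x_1,\mathbf x_2,\lambda)$, I get $J(\tilde{\mathbf A})\bigl[-\mathbf x_2;\,-\overline{\mathbf x_1}\bigr]^T=\overline{\lambda}\bigl[-\mathbf x_2;\,-\overline{\mathbf x_1}\bigr]^T$, and negating both sides recovers the second displayed form. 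The main thing to be careful about is the placement of conjugates when $j$ is pushed past complex scalars; all other steps are routine. No serious obstacle is expected, since once $j\alpha=\overline{\alpha}j$ is applied consistently, everything reduces to a direct comparison of complex block rows.
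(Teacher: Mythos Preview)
Your proof is correct. The paper does not actually prove Lemma~\ref{lemma3.1}; it is stated as a known result attributed to \cite{Zhang1997}. Your direct approach---expanding via $j\alpha=\overline{\alpha}j$, matching the complex and $j$-parts, and then conjugating the second equation to obtain the bottom block row---is the standard one, and in fact the identical block manipulation appears later in the paper's own proof of Lemma~\ref{lemma3.3}. Your derivation of the second matrix form by right-multiplying $\tilde{\mathbf x}$ by $j$ is a clean way to get it without repeating the calculation; nothing is missing.
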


This lemma states that a quaternion matrix of dimension $n \times n$ possesses exactly $n$ complex eigenvalues, refered to as the standard eigenvalues of a quaternion matrix. Furthermore, this lemma transforms studying eigenvalue theory of quaternion matrices into studying eigenvalue theory of complex adjoint matrices, which is a complex matrix, and the eigenvalue theory of complex matrices is already highly developed and well-established.

Drawing inspiration from the complex adjoint matrix, we aspire to define analogous matrices for dual quaternion matrices while retaining the desirable properties of complex adjoint matrices. Therefore, we introduce the dual complex adjoint matrices, and define the following mapping $\mathcal {J}$, which is a bijection from the set of dual quaternion matrices to the set of dual complex adjoint matrices.
\begin{align}
\mathcal{J}:\hat{\mathbb{Q}}^{m\times n} &\to DM(\mathbb{C}^{m\times n}),
\\A_1+A_2 j+( A_3+ A_4 j)\varepsilon &\mapsto \begin{bmatrix}
 A_1 & A_2 \\ 
-\overline{A_2}   & \overline{ A_1} 
\end{bmatrix} 
+\begin{bmatrix}
 A_3 &  A_4 \\ 
-\overline{ A_4}   & \overline{A_3} 
\end{bmatrix} \varepsilon,
\end{align}
where 
\begin{equation}
DM(\mathbb{C}^{m\times n})=
\left.\left\{\begin{bmatrix}
A_1 &  A_2 \\ 
-\overline{ A_2}   & \overline{ A_1} 
\end{bmatrix} 
+\begin{bmatrix}
 A_3 & A_4 \\ 
-\overline{A_4}   & \overline{ A_3} 
\end{bmatrix} \varepsilon \right| A_1, A_2, A_3, A_4\in\mathbb{C}^{m\times n}\right\}
\end{equation}
is the set of dual complex adjoint matrices with dimension $2m \times 2n$. Specifically, $DM(\mathbb{C}^{m \times n})$ is a subset of $\mathbb {DC}^{2m \times 2n}$. We refer to $\mathcal{J}(\hat{\mathbf Q})$ as the dual complex adjoint matrix of the dual quaternion matrix $\hat{\mathbf Q}$.

There are some useful properties about mapping $\mathcal{J}$ as follows.
\begin{lemma}\label{lemma3.2}
Let $\hat{\mathbf P},\hat{\mathbf P}_1\in \hat{\mathbb{Q}}^{m\times k}$, $\hat{\mathbf Q} \in \hat{\mathbb{Q}}^{k\times n}$, $\hat{\mathbf R}\in \hat{\mathbb{Q}}^{n\times n}$, then 
\begin{itemize}
\item[{\rm (i)}]  $\mathcal{J}(\hat{\mathbf O}^{m\times n})=\hat{O}^{2m\times 2n},\mathcal{J}(\hat{\mathbf I}_n)=\hat{I}_{2n}$.
\item[{\rm (ii)}]  $\mathcal{J}(\hat{\mathbf P}\hat{\mathbf Q})=\mathcal{J}(\hat{\mathbf P})\mathcal{J}(\hat{\mathbf Q})$.
\item[{\rm (iii)}]  $\mathcal{J}(\hat{\mathbf P}+\hat{\mathbf P}_1)=\mathcal{J}(\hat{\mathbf P})+\mathcal{J}(\hat{\mathbf P}_1)$.
\item[{\rm (iv)}]  $\mathcal{J}(\hat{\mathbf P}^*)=\mathcal{J}(\hat{\mathbf P})^*$.
\item[{\rm (v)}]  $\mathcal{J}(\hat{\mathbf R})$ is unitary (Hermitian) if and only if $\hat{\mathbf R}$ is unitary (Hermitian).
\item[{\rm (vi)}]  $\mathcal{J}$ is an isomorphism from ring $(\hat{\mathbb{Q}}^{n\times n},+,\cdot)$ to ring $(DM(\mathbb{C}^{n\times n}),+,\cdot)$.
\end{itemize}
\end{lemma}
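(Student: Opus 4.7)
The plan is to work through the six items in the order (i), (iii), (ii), (iv), (v), (vi), since (i) and (iii) fall out immediately from the definitions, (ii) and (iv) are the computational core, (v) is a formal consequence, and (vi) is a summary collecting the earlier items with the injectivity/surjectivity of $\mathcal{J}$, which is built into its definition as a bijection.

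Items (i) and (iii) I will dispatch by inspection: sending the zero or identity dual quaternion matrix through $\mathcal{J}$ produces the block matrix with the appropriate zero or identity complex blocks, and addition acts coordinate-wise on the four complex matrices $A_1,A_2,A_3,A_4$ both in $\hat{\mathbb Q}^{m\times n}$ and in $DM(\mathbb{C}^{m\times n})$. For (ii), I will first handle the purely quaternionic case by writing $\tilde{\mathbf P}=P_1+P_2 j$ and $\tilde{\mathbf Q}=Q_1+Q_2 j$, then expand $\tilde{\mathbf P}\tilde{\mathbf Q}$ using the twist rule $jc=\overline{c}j$ and $j^2=-1$ to obtain $(P_1Q_1-P_2\overline{Q_2})+(P_1Q_2+P_2\overline{Q_1})j$, and finally multiply the two $2\times 2$ block matrices $J(\tilde{\mathbf P})J(\tilde{\mathbf Q})$ block-by-block to check the entries agree. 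The dual extension is then a one-line consequence: writing $\hat{\mathbf P}=\tilde{\mathbf P}_{st}+\tilde{\mathbf P}_{\mathcal I}\varepsilon$ and $\hat{\mathbf Q}=\tilde{\mathbf Q}_{st}+\tilde{\mathbf Q}_{\mathcal I}\varepsilon$ and using $\varepsilon^2=0$, the standard part of $\mathcal{J}(\hat{\mathbf P}\hat{\mathbf Q})$ matches the standard part of $\mathcal{J}(\hat{\mathbf P})\mathcal{J}(\hat{\mathbf Q})$ by the quaternion case, and the coefficient of $\varepsilon$ matches because both sides reduce to $J(\tilde{\mathbf P}_{st})J(\tilde{\mathbf Q}_{\mathcal I})+J(\tilde{\mathbf P}_{\mathcal I})J(\tilde{\mathbf Q}_{st})$.

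For (iv), I will first compute $\tilde{\mathbf P}^{*}$ at the quaternionic level: transposing and conjugating entries of $\tilde{\mathbf P}=P_1+P_2 j$ gives $\tilde{\mathbf P}^{*}=P_1^{*}-P_2^{T}j$, using that entrywise quaternion conjugation flips the sign of the $j$-component. Plugging into the definition of $\mathcal{J}$ and using the identities $\overline{P_2^{T}}=P_2^{*}$ and $\overline{P_1^{*}}=P_1^{T}$, the resulting block matrix equals the conjugate transpose of $J(\tilde{\mathbf P})$ block by block. Lifting to dual quaternions is immediate, since conjugation, transposition, and the definition of $\mathcal{J}$ all act separately on standard and dual parts. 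Then (v) follows purely formally: if $\hat{\mathbf R}^{*}\hat{\mathbf R}=\hat{\mathbf I}_n$, apply $\mathcal{J}$ and invoke (i), (ii), (iv) to get $\mathcal{J}(\hat{\mathbf R})^{*}\mathcal{J}(\hat{\mathbf R})=\hat I_{2n}$; the converse direction uses that $\mathcal{J}$ is a bijection onto $DM(\mathbb{C}^{n\times n})$, and the Hermitian equivalence is handled the same way.

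Finally, (vi) collects (i), (ii), (iii) as the ring homomorphism axioms, combined with the fact that $\mathcal{J}$ is bijective between $\hat{\mathbb Q}^{n\times n}$ and $DM(\mathbb{C}^{n\times n})$ by construction. The main obstacle throughout is purely bookkeeping: carefully distinguishing $\overline{\cdot}$ (entrywise complex conjugation), $(\cdot)^{T}$ (plain transpose), and $(\cdot)^{*}$ (conjugate transpose or quaternion conjugate) so that the identities $\overline{P^{T}}=P^{*}$, $\overline{P^{*}}=P^{T}$, and $jc=\overline{c}j$ are used in the right places when verifying (ii) and (iv); once those block computations are laid out cleanly, everything else is formal.
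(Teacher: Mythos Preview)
Your proposal is correct and follows essentially the same approach as the paper: the paper also proves only (ii), (iv), (v) explicitly (leaving (i), (iii), (vi) as immediate), carries out the same block computations for (ii) and (iv) using the rule $jc=\overline{c}j$ and the identity $\tilde{\mathbf P}^{*}=P_1^{*}-P_2^{T}j$, and derives (v) by the same chain of equivalences invoking (i), (ii), (iv) together with bijectivity of $\mathcal{J}$. The only cosmetic difference is that the paper writes out the standard and dual parts of (ii) side by side rather than first isolating the quaternionic case and then lifting, but the underlying computation is identical.
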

\begin{proof}
We prove (ii), (iv), and (v).

Firstly, we prove equation (ii).
Suppose that $\hat{\mathbf P}=P_1+P_2 j+(P_3+P_4 j)\varepsilon$, $\hat{\mathbf Q}=Q_1+Q_2 j+( Q_3+ Q_4 j)\varepsilon$, then 
\begin{equation*}
(\mathcal{J}(\hat{\mathbf P})\mathcal{J}(\hat{\mathbf Q}))_{st}=
\begin{bmatrix}
P_1 &  P_2 \\ 
-\overline{ P_2}   & \overline{P_1} 
\end{bmatrix}
\begin{bmatrix}
 Q_1 & Q_2 \\ 
-\overline{ Q_2}   & \overline{Q_1} 
\end{bmatrix}
=\begin{bmatrix}
P_1Q_1-P_2\overline{Q_2} & P_1Q_2+P_2\overline{Q_1} \\ 
-\overline{P_2} Q_1-\overline{P_1} \overline{Q_2}  & -\overline{P_2} Q_2+\overline{P_1} \overline{Q_1} 
\end{bmatrix},
\end{equation*}
\begin{align*} 
(\mathcal{J}(\hat{\mathbf P})\mathcal{J}(\hat{\mathbf Q}))_{\mathcal I}&=
\begin{bmatrix}
P_1 & P_2 \\ 
-\overline{P_2}   & \overline{P_1} 
\end{bmatrix}
\begin{bmatrix}
 Q_3 &  Q_4 \\ 
-\overline{ Q_4}   & \overline{Q_3} 
\end{bmatrix}+\begin{bmatrix}
P_3 &  P_4 \\ 
-\overline{ P_4}   & \overline{ P_3} 
\end{bmatrix}
\begin{bmatrix}
 Q_1 & Q_2 \\ 
-\overline{Q_2}   & \overline{ Q_1} 
\end{bmatrix}
\\&=\begin{bmatrix}
P_1 Q_3-P_2\overline{Q_4}+P_3Q_1-P_4\overline{Q_2} & P_1Q_4+P_2\overline{Q_3}+P_3Q_2+P_4\overline{Q_1} \\ 
-\overline{P_2}Q_3-\overline{P_1} \overline{Q_4}-\overline{P_4}Q_1-\overline{P_3}\overline{Q_2}  & -\overline{P_2}Q_4+\overline{P_1} \overline{Q_3} -\overline{P_4}Q_2+\overline{P_3} \overline{Q_1}  
\end{bmatrix}.
\end{align*}
Since 
\begin{align*} 
(\hat{\mathbf P}\hat{\mathbf Q})_{st}&=
(P_1+P_2 j)(Q_1+Q_2 j)=(P_1Q_1-P_2\overline{Q_2})+(P_1Q_2+P_2\overline{Q_1}) j,
\end{align*}
and
\begin{align*} 
(\hat{\mathbf P}\hat{\mathbf Q})_{\mathcal I}&=
(P_1+P_2 j)(Q_3+Q_4 j)+(P_3+P_4 j)(Q_1+Q_2 j)
\\&=(P_1Q_3-P_2\overline{Q_4}+P_3Q_1-P_4\overline{Q_2})+
(P_1Q_4+P_2\overline{Q_3}+P_3Q_2+P_4\overline{Q_1}) j,
\end{align*}
then we obtain $\mathcal{J}(\hat{\mathbf P}\hat{\mathbf Q})=\mathcal{J}(\hat{\mathbf P})\mathcal{J}(\hat{\mathbf Q})$.

Next, we prove equation (iv). We have
\begin{align*} 
\mathcal{J}(\hat{\mathbf P}^*)&=\mathcal{J}(P_1^*-P_2^T  j+(P_3^*-P_4^T j)\varepsilon)
\\&=
\begin{bmatrix}
P_1^* & -P_2^T \\ 
P_2^*   & P_1^T
\end{bmatrix} 
+\begin{bmatrix}
P_3^* & -P_4^T \\ 
P_4^*   & P_3^T
\end{bmatrix} \varepsilon 
\\&=
\begin{bmatrix}
P_1 & P_2 \\ 
-\overline{P_2}   & \overline{P_1} 
\end{bmatrix} ^*
+\begin{bmatrix}
P_3 & P_4 \\ 
-\overline{P_4}   & \overline{P_3} 
\end{bmatrix}^* \varepsilon
\\&=\mathcal{J}(\hat{\mathbf P})^*.
\end{align*}

Finally, we prove (v). We have
\begin{align*} 
\hat{\mathbf R}~\text{is unitary}&\Longleftrightarrow \hat{\mathbf R}\hat{\mathbf R}^*=\hat{\mathbf R}^*\hat{\mathbf R}=\hat{\mathbf I}_n
\\&\Longleftrightarrow \mathcal{J}(\hat{\mathbf R}\hat{\mathbf R}^*)=\mathcal{J}(\hat{\mathbf R}^*\hat{\mathbf R})=\mathcal{J}(\hat{\mathbf I}_n)
\\&\Longleftrightarrow \mathcal{J}(\hat{\mathbf R})\mathcal{J}(\hat{\mathbf R}^*)=\mathcal{J}(\hat{\mathbf R}^*)\mathcal{J}(\hat{\mathbf R})=\hat{I}_{2n}
\\&\Longleftrightarrow \mathcal{J}(\hat{\mathbf R})\mathcal{J}(\hat{\mathbf R})^*=\mathcal{J}(\hat{\mathbf R})^*\mathcal{J}(\hat{\mathbf R})=\hat{I}_{2n}
\\&\Longleftrightarrow \mathcal{J}(\hat{\mathbf R})~\text{is unitary}.
\end{align*}
\begin{align*} 
\hat{\mathbf R}~\text{is Hermitian}&\Longleftrightarrow \hat{\mathbf R}=\hat{\mathbf R}^*
\\&\Longleftrightarrow \mathcal{J}(\hat{\mathbf R})=\mathcal{J}(\hat{\mathbf R}^*)
\\&\Longleftrightarrow \mathcal{J}(\hat{\mathbf R})=\mathcal{J}(\hat{\mathbf R})^*
\\&\Longleftrightarrow \mathcal{J}(\hat{\mathbf R})~\text{is Hermitian}.
\end{align*}
\end{proof}

\subsection{Standard eigenvalues of dual quaternion matrices}
In this subsection, we utilize dual complex adjoint matrices to study the eigenvalues of dual quaternion matrices. Firstly, we introduce the definition of two dual quaternions being similar.

\begin{definition}
Let $\hat{p},\hat{q}\in \hat{\mathbb Q}$, if there exists an invertible  dual quaternion $ \hat{u}$ such that
\begin{equation}
\hat{p}=\hat{u}^{-1}\hat{q}\hat{u},
\end{equation}
then $\hat{p} $ and $\hat{q}$ are said to be similar and denoted as $\hat{p}\sim\hat{q}$.
\end{definition}

The similarity relationship of dual quaternions is an equivalent relationship, under which a dual quaternion $\hat{p}$ corresponds to an equivalent class $[\hat{p}]=\{\hat{u}^{-1}\hat{q}\hat{u}|
\hat{u}\in\hat{\mathbb{Q}}~\text{is invertible}\}$. It follows from $$\hat{u}^{-1}\hat{q}\hat{u}=\hat{u}^{-1}|\hat{u}|\hat{q}\frac{1}{|\hat{u}|}\hat{u}=(\frac{\hat{u}}{|\hat{u}|})^*\hat{q}\frac{\hat{u}}{|\hat{u}|},$$
that $[\hat{p}]=\{\hat{u}^*\hat{q}\hat{u}|
\hat{u}\in\hat{\mathbb{U}}\}$.

We have the following observation: if $\hat{\lambda}$ is the right eigenvalue of the dual quaternion matrix $\hat{\mathbf Q}\in\hat{\mathbb{Q}}^{n\times n}$, then any element in $[\hat{\lambda}]$ is the right eigenvalue of $\hat{\mathbf Q}$.

Suppose that $\hat{\mathbf v}$ is the right eigenvector of $\hat{\mathbf Q}$ with respect to the right eigenvalue $\hat{\lambda}$, i.e.,
$\hat{\mathbf Q}\hat{\mathbf v}=\hat{\mathbf v}\hat{\lambda}$.
For any invertible dual quaternion $\hat{q}$, it holds, $\hat{\mathbf Q}(\hat{\mathbf v}\hat{q})=(\hat{\mathbf v}\hat{q})(\hat{q}^{-1}\hat{\lambda}\hat{q})$.
Then $\hat{q}^{-1}\hat{\lambda}\hat{q}$ is also the right eigenvalue of $\hat{\mathbf Q}$, and the corresponding right eigenvector is $\hat{\mathbf v}\hat{q}$. Therefore, any element in $[\hat{\lambda}]$ is the right eigenvalue of $\hat{\mathbf Q}$.

Based on the above observation, we aspire to identify a representative element within the equivalence class $[\hat{\lambda}]$. Suppose that $\hat{\lambda}\in \hat{\mathbb Q}$ is the right eigenvalue of $\hat{\mathbf Q}\in\hat{\mathbb{Q}}^{n\times n}$. Next, we prove that there exists a unique $\hat{\mu}=\mu_1+\mu_2 i+(\mu_3+\mu_4 i)\varepsilon\in \mathbb{DC}$, which satisfies $\mu_2>0$ or $\mu_2=0$ and $\mu_4\ge 0$, such that $[\hat{\lambda}]=[\hat{\mu}]$. At this point, we refer to $\hat{\mu}$ as the standard right eigenvalue of the dual quaternion matrix $\hat{\mathbf Q}$.

Firstly, we employ the subsequent lemma to prove that for any $\hat{\lambda}\in \hat{\mathbb Q}$, there exists $\hat{\mu}\in \mathbb {DC}$ such that $[\hat{\lambda}]=[\hat{\mu}]$.

\begin{lemma}\label{lemma3.3}
Let $\hat{a}\in \hat{\mathbb Q}$, there exist $\hat{q}\in \hat{\mathbb{U}}$ and $\hat{\lambda}\in \mathbb {DC}$, such that $\hat{q}^*\hat{a}\hat{q}=\hat{\lambda}$.
\end{lemma}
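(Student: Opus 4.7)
The plan is to construct $\hat{q}=\tilde q_{st}+\tilde q_{\mathcal I}\varepsilon\in\hat{\mathbb U}$ in two stages: first choose the standard part $\tilde q_{st}$ so that conjugation carries $\tilde a_{st}$ into $\mathbb C$, then adjust the dual part $\tilde q_{\mathcal I}$ so that the dual component of $\hat q^*\hat a\hat q$ also falls in $\mathbb C$, all while preserving the unit constraint. Write $\hat a=\tilde a_{st}+\tilde a_{\mathcal I}\varepsilon$. By the classical quaternion fact (already built into the structure that $\mathcal J$ will exploit in Lemma \ref{lemma3.1}) that every quaternion is $\tilde q^*(\cdot)\tilde q$-similar to a complex number, I pick a unit quaternion $\tilde q_{st}$ with $\tilde q_{st}^*\tilde a_{st}\tilde q_{st}=\alpha+\beta i\in\mathbb C$; without loss of generality $\beta\ge 0$.

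Next I parametrize the dual part. Setting $\tilde c:=\tilde q_{st}^*\tilde q_{\mathcal I}$ (so $\tilde q_{\mathcal I}=\tilde q_{st}\tilde c$), the unit condition $|\hat q|=1$ reduces, since $|\tilde q_{st}|=1$, to $\tilde c+\tilde c^*=0$, i.e.\ $\tilde c$ must be a pure quaternion; conversely any pure $\tilde c$ produces a unit $\hat q$. Expanding $\hat q^*\hat a\hat q$ and using $\tilde q_{st}\tilde q_{st}^*=\tilde 1$, the standard part is $\alpha+\beta i$ and the dual part equals
\begin{equation*}
(\alpha+\beta i)\tilde c+\tilde c^*(\alpha+\beta i)+\tilde b,\qquad \tilde b:=\tilde q_{st}^*\tilde a_{\mathcal I}\tilde q_{st}.
\end{equation*}
Writing $\tilde c=c_1 i+c_2 j+c_3 k$ and multiplying out with the quaternion rules, a direct calculation gives $(\alpha+\beta i)\tilde c+\tilde c^*(\alpha+\beta i)=-2\beta c_3\,j+2\beta c_2\,k$; the real and $i$-parts cancel automatically. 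Hence, when $\beta>0$, writing $\tilde b=b_0+b_1 i+b_2 j+b_3 k$, the choices $c_3=b_2/(2\beta)$, $c_2=-b_3/(2\beta)$ (with $c_1$ free) kill the $j$- and $k$-components of the dual part, leaving $\hat q^*\hat a\hat q\in\mathbb{DC}$ as required.

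The main obstacle is the degenerate case $\beta=0$, where $\tilde a_{st}$ is in fact a real number; there $(\alpha+\beta i)\tilde c+\tilde c^*(\alpha+\beta i)=0$ for every pure $\tilde c$, so the construction above cannot modify the dual part at all. I resolve this by exploiting the extra freedom that appears precisely in this case: since reals are central in $\mathbb Q$, every unit $\tilde q_{st}$ satisfies $\tilde q_{st}^*\tilde a_{st}\tilde q_{st}=\tilde a_{st}\in\mathbb R\subset\mathbb C$, so I am free to re-select $\tilde q_{st}$ using the classical quaternion theorem \emph{applied to $\tilde a_{\mathcal I}$} to ensure $\tilde b=\tilde q_{st}^*\tilde a_{\mathcal I}\tilde q_{st}\in\mathbb C$; taking $\tilde c=0$ (i.e.\ $\tilde q_{\mathcal I}=\tilde 0$) then yields $\hat q^*\hat a\hat q=\tilde a_{st}+\tilde b\,\varepsilon\in\mathbb{DC}$, completing the proof.
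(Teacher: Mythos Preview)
Your argument is correct. The case split (standard part with nonzero imaginary component versus the degenerate real case) and the two-stage construction (fix the standard part first, then adjust the dual part, re-choosing the standard conjugator in the degenerate case) are exactly the logical structure of the paper's proof as well.

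Where you differ is in the language of the construction. You work intrinsically in the quaternion algebra: invoke the classical fact that every quaternion is unitarily similar to a complex number to handle $\tilde a_{st}$, parametrize $\tilde q_{\mathcal I}=\tilde q_{st}\tilde c$ with $\tilde c$ pure, and solve two real equations for $c_2,c_3$. The paper instead passes through the dual complex adjoint representation $\mathcal J(\hat a)=A_1+A_2\varepsilon$: it diagonalizes the $2\times 2$ complex matrix $A_1$ by a unitary $X$, then in the nondegenerate case chooses an off-diagonal skew-Hermitian $Z$ so that $(X+XZ\varepsilon)^*(A_1+A_2\varepsilon)(X+XZ\varepsilon)$ is diagonal, and pulls back via $\mathcal J^{-1}$. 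Your pure $\tilde c$ corresponds precisely to such a skew-Hermitian $Z$ (with your free parameter $c_1$ sitting on the diagonal, which the paper simply sets to zero), so the two constructions are isomorphic. Your route is more elementary and self-contained; the paper's route showcases the adjoint-matrix machinery that the rest of the article relies on and reduces the computation to standard complex linear algebra.
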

\begin{proof}
Suppose that $\hat{a}=a_1+a_2 j+(a_3+a_4 j)\varepsilon$. 
Since the mapping $\mathcal{J}$ is a bijection, it follows from (ii) and (iv) in Lemma \ref{lemma3.2} that we need only demonstrate the existence of a unit dual quaternion $\hat{q}$ and a dual complex number $\hat{\lambda}$ such that
\begin{equation}
\mathcal{J}(\hat{q})^*\mathcal{J}(\hat{a})\mathcal{J}(\hat{q})=\mathcal{J}(\hat{\lambda}).
\end{equation}

Suppose that $\mathcal{J}(\hat{a})=A_1+A_2\varepsilon$, then there exist an eigenvalue $\lambda_1\in \mathbb C$ of 
$A_1$ and $\mathbf x\in \mathbb {C}^{2\times 1}$ is the corresponding unit eigenvector, i.e., $A_1\mathbf x=\lambda _1\mathbf x$. Suppose that $\mathbf x=[x_1 ~ -\overline{x_2}]^T$. It follows from
$A_1\mathbf x=\lambda _1\mathbf x$ that 
\begin{equation*}
a_1x_1-a_2\overline{x_2}=\lambda_1 x_1,
~
-\overline{a_2}x_1-\overline{a_1}\overline{x_2}=-\lambda _1 \overline{x_2}. 
\end{equation*}
Hence $$a_1x_2+a_2\overline{x_1}=\overline{\lambda _1} x_2~,~-\overline{a_2}x_2+\overline{a_1} \overline{x_1}=\overline{\lambda _1} \overline{x_1}.$$
Denote $\mathbf y=[x_2 ~ \overline{x_1}]^T$, then $A_1\mathbf y=\overline{\lambda _1}\mathbf y$. Furthermore, it holds $\mathbf x^*\mathbf y=0$. 
Thus, denote $X=\begin{bmatrix}
x_1  &  x_2 \\
-\overline{x_2}  & \overline{x_1}
\end{bmatrix}$, then $X$ is a unitary matrix and $X^*A_1X=\operatorname{diag}(\lambda_1,\overline{\lambda _1})$.

We consider the first case that $\lambda_1\notin \mathbb R$. 
Since $\lambda_1\notin \mathbb R$, then $\lambda_1-\overline{\lambda _1}\ne 0$.
Suppose that $X^*A_2X=\begin{bmatrix}
\lambda _2  &  z \\
-\overline{z}  & \overline{\lambda _2}
\end{bmatrix}$. Denote $Z=\begin{bmatrix}
0 &  -\frac{z}{\lambda_1-\overline{\lambda _1}}  \\
-\frac{\overline{z}}{\lambda_1-\overline{\lambda _1}} & 0
\end{bmatrix}$. By direct calculation, it holds
\begin{equation*}
(X+XZ\varepsilon )^*( A_1+A_2\varepsilon)(X+XZ\varepsilon )=\operatorname{diag}(\hat{\lambda}, \overline{\hat{\lambda}}),
\end{equation*} 
where $\hat{\lambda}=\lambda_1+\lambda_2\varepsilon$.
Furthermore, since 
\begin{equation*}
(X+XZ\varepsilon )^*(X+XZ\varepsilon )=I+(Z^*+Z)\varepsilon=I+ O\varepsilon=\hat{I},
\end{equation*}
then $\hat{Q}=X+XZ\varepsilon\in DM(\mathbb C)$ is a unitary matix. Denote $\hat{q}=\mathcal{J}^{-1}(\hat{Q})$, then $\hat{ q}$ is a unit dual quaternion and it holds $\mathcal{J}(\hat{q})^*\mathcal{J}(\hat{a})\mathcal{J}(\hat{q})=\mathcal{J}(\hat{\lambda})$.

We consider the second case that $\lambda_1\in \mathbb R$. Since $\lambda_1\in \mathbb R$, then $A_1=\lambda_1 XX^*=\lambda_1 I$. Therefore $a_1+a_2 j=\lambda_1\in \mathbb R$. Suppose that $\lambda_2\in \mathbb C$ is an eigenvalue of $A_2$ and $\mathbf y=[y_1 ~ -\overline{y_2}]^T\in \mathbb {C}^{2\times 1}$ is the corresponding unit eigenvector.
Denote $Y=\begin{bmatrix}
y_1  &  y_2 \\
-\overline{y_2}  & \overline{y_1}
\end{bmatrix}$, similar to the proof from the first case, we can conclude that $Y$ is a unitary matrix and $Y^*A_2Y=\operatorname{diag}(\lambda_2,\overline{\lambda_2})$.
Then it holds
$$Y^*(A_1+A_2\varepsilon)Y=Y^* A_1 Y+\operatorname{diag}(\lambda_2,\overline{\lambda_2})\varepsilon=\lambda_1I+\operatorname{diag}(\lambda_2,\overline{\lambda_2})\varepsilon=\operatorname{diag}(\hat{\lambda},\overline{\hat{\lambda}}),$$ where $\hat{\lambda}=\lambda_1+\lambda_2\varepsilon$. Denote
$\hat{q}=\mathcal{J}^{-1}(Y)=y_1+y_2 j$, then it holds
$\mathcal{J}(\hat{q})^*\mathcal{J}(\hat{a})\mathcal{J}(\hat{q})=\mathcal{J}(\hat{\lambda})$.
\end{proof}
It follows from Lemma \ref{lemma3.3} that there exist a unit dual quaternion $\hat{q}\in \hat{\mathbb U}$ such that $\hat{\mu}=\hat{q}^*\hat{\lambda}\hat{q}$ is a dual complex number, hence $[\hat{\lambda}]=[\hat{\mu}]$. Since $j^*\hat{\mu}j=\overline{\hat{\mu}}$, then $[\hat{\mu}]=[\overline{\hat{\mu}}]$. Suppose that $\hat{\mu}=\mu_1+\mu_2 i+(\mu_3+\mu_4 i)\varepsilon$. According to the following lemma, if there exist another dual complex number $\hat{\eta}\in \mathbb{DC}$ such that $[\hat{\lambda}]=[\hat{\eta}]$, then $\hat{\eta}=\hat{\mu}$ or $\hat{\eta}=\overline{\hat{\mu}}$. Then if we further assume that $\mu_2>0$ or $\mu_2=0$ and $\mu_4\ge 0$, then the uniqueness of $\hat{\mu}$ is guaranteed.

\begin{lemma}\label{dec}
Let $\hat{a}=a_1+a_2\varepsilon$, $\hat{b}=b_1+b_2\varepsilon\in\mathbb{DC}$, $\hat{q}=q_1+q_2 j+(q_3+q_4 j)\varepsilon\in \hat{\mathbb{U}}$, such that $\hat{q}^*\hat{a}\hat{q}=\hat{b}$, then
$\hat{a}=\hat{b}$ or $\hat{a}=\overline{\hat{b}}$.
\end{lemma}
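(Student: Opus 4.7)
The plan is to push the given identity through the ring embedding $\mathcal{J}$ into the dual complex adjoint matrices, where the problem collapses to elementary $2\times 2$ linear algebra. By parts (ii) and (iv) of Lemma \ref{lemma3.2}, the hypothesis $\hat{q}^{*}\hat{a}\hat{q}=\hat{b}$ is equivalent to $\mathcal{J}(\hat{q})^{*}\mathcal{J}(\hat{a})\mathcal{J}(\hat{q})=\mathcal{J}(\hat{b})$. The key observation is that, because $\hat{a},\hat{b}\in\mathbb{DC}$ have no $j$-component, their images under $\mathcal{J}$ are diagonal: $\mathcal{J}(\hat{a})=\operatorname{diag}(a_{1},\overline{a_{1}})+\operatorname{diag}(a_{2},\overline{a_{2}})\varepsilon$, and analogously for $\hat{b}$. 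By part (v) of Lemma \ref{lemma3.2}, $Q:=\mathcal{J}(\hat{q})$ is unitary, with standard part $Q_{st}=\begin{bmatrix}q_{1}&q_{2}\\ -\overline{q_{2}}&\overline{q_{1}}\end{bmatrix}$ satisfying $|q_{1}|^{2}+|q_{2}|^{2}=1$.

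Using unitarity I rewrite the identity as $\mathcal{J}(\hat{a})\,Q=Q\,\mathcal{J}(\hat{b})$ and split it into standard and dual parts. The standard part reduces, after matching entries, to the two scalar relations $(a_{1}-b_{1})q_{1}=0$ and $(a_{1}-\overline{b_{1}})q_{2}=0$. Since $|q_{1}|^{2}+|q_{2}|^{2}=1$, at least one of $q_{1},q_{2}$ is nonzero, so I get a binary case split: either $q_{1}\neq 0$, which forces $a_{1}=b_{1}$, or $q_{2}\neq 0$, which forces $a_{1}=\overline{b_{1}}$. (When both are nonzero, both relations hold and $b_{1}\in\mathbb{R}$, so the two conclusions coincide.)

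For the remaining coefficients I look at the dual part, $\operatorname{diag}(a_{1},\overline{a_{1}})Q_{\mathcal I}+\operatorname{diag}(a_{2},\overline{a_{2}})Q_{st}=Q_{st}\operatorname{diag}(b_{2},\overline{b_{2}})+Q_{\mathcal I}\operatorname{diag}(b_{1},\overline{b_{1}})$, and isolate in each branch a single entry in which the standard-part equality kills the $Q_{\mathcal I}$ contribution. In the branch $q_{1}\neq 0$ and $a_{1}=b_{1}$, the $(1,1)$-entry collapses to $a_{2}q_{1}=q_{1}b_{2}$, giving $a_{2}=b_{2}$ and hence $\hat{a}=\hat{b}$. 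In the branch $q_{2}\neq 0$ and $a_{1}=\overline{b_{1}}$, the $(1,2)$-entry collapses to $a_{2}q_{2}=q_{2}\overline{b_{2}}$, giving $a_{2}=\overline{b_{2}}$ and hence $\hat{a}=\overline{\hat{b}}$. The only subtlety is picking the correct entry in each branch so that the unknown dual-part block of $Q$ drops out, which the case distinction above makes automatic.
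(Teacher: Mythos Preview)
Your proof is correct and takes a genuinely different route from the paper's. The paper works directly in the quaternion algebra: it expands $\hat{q}^{*}\hat{a}\hat{q}$ by hand, splits into the cases $a_{1}\in\mathbb{R}$ versus $a_{1}\notin\mathbb{R}$, and in the latter case deduces $\overline{q_{1}}q_{2}=0$ from the standard part before chasing the dual part through further quaternionic arithmetic. You instead push everything through $\mathcal{J}$, so that the problem becomes the commutation of a unitary $2\times 2$ dual complex matrix with two diagonal matrices; the case split is then on which of $q_{1},q_{2}$ is nonzero, and in each branch a single matrix entry of the dual part kills the $Q_{\mathcal I}$ term automatically. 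Your argument is shorter, avoids the separate treatment of $a_{1}\in\mathbb{R}$, and nicely illustrates the philosophy of the paper by using its own main tool. The paper's direct computation, on the other hand, yields slightly more as a byproduct: in the case $a_{1}\notin\mathbb{R}$ it explicitly shows $q_{2}=q_{4}=0$ (or $q_{1}=q_{3}=0$), which is exactly what is needed for Corollary~\ref{corollary8}. Your framework recovers this too (e.g.\ if $a_{1}=b_{1}\notin\mathbb{R}$ then $a_{1}\neq\overline{b_{1}}$, so the $(1,2)$ standard entry forces $q_{2}=0$ and then the $(1,2)$ dual entry forces $q_{4}=0$), but you would need to say so if you also want the corollary.
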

\begin{proof}
We consider the first case that $a_1\in \mathbb R$.
The equation $\hat{q}^*\hat{a}\hat{q}=\hat{b}$ yields
\begin{align*}
b_1+b_2\varepsilon&=\hat{q}^*a_1\hat{q}+\hat{q}^*(a_2\varepsilon)\hat{q}\\&=a_1\hat{q}^*\hat{q}+(\overline{q_1}-q_2 j)a_2(q_1+q_2 j)\varepsilon\\&=a_1+(a_2\overline{q_1}q_1+\overline{a_2}q_2\overline{q_2}+(a_2-\overline{a_2})\overline{q_1}q_2 j)\varepsilon.
\end{align*}
Then $(a_2-\overline{a_2})\overline{q_1}q_2=0$. This indicates that 
at least one of $a_2\in \mathbb R$, $q_1=0$ and $q_2=0$ holds.
If $a_2\in \mathbb R$ holds, then $b_2=a_2\overline{q_1}q_1+\overline{a_2}q_2\overline{q_2}=a_2(\overline{q_1}q_1+q_2\overline{q_2})=a_2$. If $q_1=0$, then
$b_2=\overline{a_2}q_2\overline{q_2}=\overline{a_2}$. If $q_2=0$, then $b_2=a_2\overline{q_1}q_1=a_2$. Therefore, $b_2=a_2$ or $b_2=\overline{a_2}$ holds. Furthermore, since $a_1$ is a real number, then $\hat{a}=\hat{b}$ or $\hat{a}=\overline{\hat{b}}$ holds.

We consider the second case that $a_1\notin \mathbb R$. Consider the standard parts on both sides of equation $\hat{q}^*\hat{a}\hat{q}=\hat{b}$, it holds 
\begin{align*} 
b_1&=(\overline{q_1}-q_2 j)a_1(q_1+q_2 j)
\\&=a_1\overline{q_1}q_1+\overline{a_1}\overline{q_2}q_2+(a_1-\overline{a_1})\overline{q_1}q_2 j.
\end{align*}
It follows from $b_1\in \mathbb C$ and $a_1\notin \mathbb R$, that $\overline{q_1}q_2=0$. Hence, at least one of $q_1=0$ and $q_2=0$ holds.

If $q_2=0$, then it holds $\overline{q_1}q_1=1$ and $b_1=a_1$.
Consider the dual parts on both sides of equation $\hat{q}^*\hat{a}\hat{q}=\hat{b}$, it holds
\begin{align*} 
b_2&=\overline{q_1}a_1(q_3+q_4 j)+\overline{q_1}a_2q_1+(\overline{q_3}-q_4 j)a_1q_1
\\&=a_2+a_1(\overline{q_1}q_3+\overline{q_3}q_1)+\overline{q_1}q_4(a_1-\overline{a_1}) j.
\end{align*}
Hence $\overline{q_1}q_4(a_1-\overline{a_1})=0$. It follows from $q_1\ne 0$ and $a_1\notin \mathbb R$, that $q_4=0$. This yields that $\hat{q}$ is a unit dual complex number, then $\hat{1}=(\overline{q_1}+\overline{q_3}\varepsilon )(q_1+q_3\varepsilon)=\overline{q_1}q_1+(\overline{q_1}q_3+\overline{q_3}q_1)\varepsilon$.
Hence $\overline{q_1}q_3+\overline{q_3}q_1=0$, then $b_2=a_2$.
The analysis above shows that if $a_1\notin \mathbb R$ and $q_2=0$, then $\hat{a}=\hat{b}$.

Similarly, it can be proven that if $a_1\notin \mathbb R$ and $q_1=0$, then $\hat{a}=\overline{\hat{b}}$.
\end{proof}

It follows from Lemma \ref{lemma3.3} and Lemma \ref{dec}, that when considering the right eigenvalues of dual quaternion matrices, we can focus on the standard right eigenvalues.

Based on the proof process of Lemma \ref{dec}, we can also present the set of all $\hat{q}$ that satisfy the equation $\hat{q}^*\hat{a}\hat{q}=\hat{b}$.

\begin{corollary}\label{corollary8}
Let $\hat{a}=a_1+a_2\varepsilon$, $\hat{b}=b_1+b_2\varepsilon\in\mathbb{DC}$, $a_1,b_1\notin \mathbb R$, $\hat{q}=q_1+q_2 j+(q_3+q_4 j)\varepsilon\in \hat{\mathbb{U}}$ such that $\hat{q}^*\hat{a}\hat{q}=\hat{b}$, then 
\begin{equation}
\hat{a}=\hat{b}~and~\{\hat{q}\in \hat{\mathbb U}|\hat{a}\hat{q}=\hat{q}\hat{b}\}=\{ \hat{q}|\hat{q} \in \mathbb{DC}\cap \hat{\mathbb U}\},   
\end{equation}
or 
\begin{equation}
\hat{a}=\overline{\hat{b}}~and~\{ \hat{q}\in \hat{\mathbb U}|\hat{a}\hat{q}=\hat{q}\hat{b} \}= \{ \hat{q}j|\hat{q} \in \mathbb{DC}\cap \hat{\mathbb U}\}.    
\end{equation}
\end{corollary}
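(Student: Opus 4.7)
The plan is to sharpen the analysis carried out inside the proof of Lemma \ref{dec} so as to pin down exactly which unit dual quaternions $\hat{q}$ realize each of the two possible equalities $\hat{a}=\hat{b}$ and $\hat{a}=\overline{\hat{b}}$, and then to verify the easy reverse inclusions by direct substitution. The overall shape of the argument is a refinement (rather than a replacement) of Lemma \ref{dec}, together with two short computations that exploit the fact that $\mathbb{DC}$ commutes with itself and that $j^{*}cj=\overline{c}$ for any $c\in\mathbb{C}$ (and hence for any $c\in\mathbb{DC}$).

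First I would invoke Lemma \ref{dec} to reduce to the two cases $\hat{a}=\hat{b}$ and $\hat{a}=\overline{\hat{b}}$. Then, writing $\hat{q}=q_{1}+q_{2}j+(q_{3}+q_{4}j)\varepsilon$ and expanding $\hat{q}^{*}\hat{a}\hat{q}$ exactly as in the proof of Lemma \ref{dec}, the standard $j$-component of the identity $\hat{q}^{*}\hat{a}\hat{q}=\hat{b}$ yields $(a_{1}-\overline{a_{1}})\overline{q_{1}}q_{2}=0$; since $a_{1}\notin\mathbb{R}$, this forces $\overline{q_{1}}q_{2}=0$, so either $q_{2}=0$ or $q_{1}=0$ (and, by unitarity $|q_{1}|^{2}+|q_{2}|^{2}=1$, not both).

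In the subcase $q_{2}=0$, matching standard parts gives $b_{1}=a_{1}$, and matching the dual $j$-component gives $(a_{1}-\overline{a_{1}})\overline{q_{1}}q_{4}=0$, whence $q_{4}=0$. Thus $\hat{q}=q_{1}+q_{3}\varepsilon\in\mathbb{DC}\cap\hat{\mathbb{U}}$, and a short check using the unitarity relation $\overline{q_{1}}q_{3}+\overline{q_{3}}q_{1}=0$ confirms $b_{2}=a_{2}$. Conversely, any $\hat{q}\in\mathbb{DC}\cap\hat{\mathbb{U}}$ commutes with $\hat{a}\in\mathbb{DC}$, so $\hat{q}^{*}\hat{a}\hat{q}=\hat{a}\hat{q}^{*}\hat{q}=\hat{a}=\hat{b}$. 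In the subcase $q_{1}=0$, an entirely parallel argument forces $q_{3}=0$, giving $\hat{q}=(q_{2}+q_{4}\varepsilon)j$ with $q_{2}+q_{4}\varepsilon\in\mathbb{DC}\cap\hat{\mathbb{U}}$, and matching standard parts yields $b_{1}=\overline{a_{1}}$, so we are in the case $\hat{a}=\overline{\hat{b}}$. For the reverse inclusion, if $\hat{q}=\hat{r}j$ with $\hat{r}\in\mathbb{DC}\cap\hat{\mathbb{U}}$, then
\begin{equation*}
\hat{q}^{*}\hat{a}\hat{q}=j^{*}\hat{r}^{*}\hat{a}\hat{r}j=j^{*}\hat{a}\,\hat{r}^{*}\hat{r}\,j=j^{*}\hat{a}j=\overline{\hat{a}}=\hat{b},
\end{equation*}
where I use $\hat{r}^{*}\hat{a}=\hat{a}\hat{r}^{*}$ (commutativity inside $\mathbb{DC}$) and $j^{*}cj=\overline{c}$ extended $\mathbb{DC}$-linearly.

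The main obstacle is the bookkeeping in the $q_{1}=0$ subcase, which is only asserted by ``similarly'' in the proof of Lemma \ref{dec}. I would have to carry out the analogue of the dual-part computation explicitly, using the unitarity relation $\overline{q_{2}}q_{4}+\overline{q_{4}}q_{2}=0$ in place of $\overline{q_{1}}q_{3}+\overline{q_{3}}q_{1}=0$, and verify that the resulting relation between $a_{2}$ and $b_{2}$ is precisely $b_{2}=\overline{a_{2}}$, so that the identified $\hat{q}$ are genuinely characterized by lying in $(\mathbb{DC}\cap\hat{\mathbb{U}})\cdot j$; no deeper machinery is needed beyond the commutativity of $\mathbb{DC}$ and the anti-commutation $jc=\overline{c}j$.
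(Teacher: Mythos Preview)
Your proposal is correct and follows essentially the same approach as the paper: you reuse the computations from the proof of Lemma \ref{dec} in the case $a_{1}\notin\mathbb{R}$ to force $q_{1}q_{2}=0$, then deduce $q_{4}=0$ (resp.\ $q_{3}=0$) in each subcase, and finally verify the reverse inclusions by commutativity of $\mathbb{DC}$ and the relation $j^{*}cj=\overline{c}$. The paper's own proof is slightly terser (it leaves the reverse inclusions and the $q_{1}=0$ branch as one-line remarks), but the logical content is identical.
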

\begin{proof}
It follows from the proof process of Lemma \ref{dec} for the case $a_1\notin \mathbb R$, at least one of $q_1=0$ and $q_2=0$ holds. If $q_2=0$, then $\hat{a}=\hat{b}$ and $q_2=q_4=0$, i.e., $\hat{q}\in \mathbb{DC}$. Besides, for any $\hat{q}\in \mathbb{DC}\cap \hat{\mathbb U}$, we have $\hat{a}\hat{q}=\hat{q}\hat{b}$. That is to say 
$\{ \hat{q}\in \hat{\mathbb U}|\hat{a}\hat{q}=\hat{q}\hat{b} \}=\{ \hat{q}|\hat{q} \in \mathbb{DC}\cap \hat{\mathbb U}\}$.
On the other hand, if $q_1=0$, then $\hat{a}=\overline{\hat{b}}$ and $q_1=q_3=0$, i.e., $\hat{q}j\in \mathbb{DC}$. Similarly, we have $\{\hat{q}\in \hat{\mathbb U}|\hat{a}\hat{q}=\hat{q}\hat{b}\}=\{ \hat{q}j|\hat{q} \in \mathbb{DC}\cap \hat{\mathbb U}\}$.
\end{proof}

\subsection{Application of dual complex adjoint matrix in Hand-Eye calibration problem}
In 1989, Shiu and Ahmad \cite{qc8} and Tsai and Lenz \cite{qc9} formulated the Hand-Eye calibration problem as solving a homogeneous transformation matrix equation:
$$AX=XB,$$
where A and B represent the coordinate transformation relationship between the two movements of the end-efector and between the two movements of the camera, respectively, and X represents the unknown homogeneous transformation matrix from the robot end-efector frame (hand) to the camera frame (eye). 

To allow a simultaneous computation of the transformations from robot world to robot base and from robot tool to robot flange coordinate frames, Zhuang et al. \cite{qc10} constructed another homogeneous transformation equation
$$AX=YB,$$ where $A$ and $B$ represent the transformation matrix from the robot base to the end-efector and from the world base to the camera, respectively, and $X$ and $Y$ represent the unknown homogeneous transformation matrices from the end-effector to the camera and from the robot base to the world coordinate system, respectively. 

Futhermore, there has also other Hand-Eye calibration methods like calibration based on reprojection error and $AXB=YCZ$.

In three-dimensional Euclidean space, the motion of a rigid body is the rotation and translation of coordinates around the spiral axis, then 
a homogeneous transformation matrix $T$ is defined as 
$T=\begin{bmatrix}
R  & \mathbf t\\ 
0  &1
\end{bmatrix},$
where R is a rotation matrix of size $3\times 3$ and $\mathbf t=(t_1,t_2,t_3)^T\in \mathbb R^{3\times 1}$ is a translation vector of size $3\times 1$.
A rotation can be described as rotation around a unit axis $\mathbf n=(n_1,n_2,n_3)^T\in \mathbb R^{3\times 1}$ with an angle $\theta \in[-\pi,\pi]$. Then the 
rotation matrix R can be formulated as 
$$\small{
\left.R=\left[\begin{array}{ccc}n_1^2+(1-n_1^2)\cos\theta&n_1n_2(1-\cos\theta)-n_3\sin\theta&n_1n_3(1-\cos\theta)+n_2\sin\theta\\n_1n_2(1-\cos\theta)+n_3\sin\theta&n_2^2+(1-n_2^2)\cos\theta&n_2n_3(1-\cos\theta)-n_1\sin\theta\\n_1n_3(1-\cos\theta)+n_2\sin\theta&n_2n_3(1-\cos\theta)+n_1\sin\theta&n_3^2+(1-n_3^2)\cos\theta\end{array}\right.\right].}$$

The 3D motion of a rigid body can also be represented by a unit dual quaternion \cite{Daniilidis1999}. Let 
$\tilde{q}_{st}=\cos\left(\frac\theta2\right)+\sin\left(\frac\theta2\right)n_1i+\sin\left(\frac\theta2\right)n_2j+\sin\left(\frac\theta2\right)n_3k,$ and $\tilde{q}_{\mathcal I}=\frac{1}{2}\tilde{t}\tilde{q}_{st},$
where $\tilde{t}=t_1i+t_2j+t_3k$. Then the unit dual quaternion $\hat{q}=\tilde{q}_{st}+\tilde{q}_{\mathcal I}\varepsilon$ and the homogeneous
transformation matrix $T$ denote the same motion.

\subsubsection{AX=XB Hand-Eye calibration problem}
The $AX=XB$ Hand-Eye calibration problem is to solve the following problem:
\begin{equation}
A^{(i)}X=XB^{(i)}
\end{equation}
for $i=1,2,\cdots,n$, where $X$ is unknown transformation matrix from the robot end-efector frame (hand) to the camera frame (eye), $A^{(i)}$ and $B^{(i)}$ represent the coordinate transformation relationship between the two movements of the end-efector and between the two movements of the camera, respectively. Suppose that the transformation matrices $X$, $A^{(i)}$ and $B^{(i)}$ are encoded with the unit dual quaternions $\hat{x}, \hat{a}_i, \hat{b}_i$, respectively, for $i=1,2,\cdots,n$.
Then the $AX=XB$ Hand-Eye calibration problem can be reformulated as the following problem:
\begin{equation}\label{AX=XB}
\hat{a}_i\hat{x}=\hat{x}\hat{b}_i,
\end{equation}
for $i=1,2,\cdots,n$, where $\{\hat{a}_i \}^n_{i=1},\{ \hat{b}_i\}^n_{i=1} \subset \hat{\mathbb{U}}$ are known in advance and $\hat{x}$ is an undetermined unit dual quaternion.

First, we consider the case that $\hat{a},\hat{b}\in \hat{\mathbb{Q}}$ and there exist $\hat{q}\in\hat{\mathbb{U}}$ such that the equation $\hat{a}\hat{q}=\hat{q}\hat{b}$ holds. 
By Lemma \ref{lemma3.3}, there exist $\lambda,\mu\in\mathbb{DC}$, and $\hat{x},\hat{y}\in \hat{\mathbb U}$, such that $\hat{a}=\hat{x}^*\hat{\lambda}\hat{x}$ and $\hat{b}=\hat{y}^*\hat{\mu}\hat{y}$. Since the equation $\hat{a}\hat{q}=\hat{q}\hat{b}$ holds, 
we have $\hat{x}^*\hat{\lambda}\hat{x}
\hat{q}=\hat{q}\hat{y}^*\hat{\mu}\hat{y}$, 
which is equilvalent to $\hat{\lambda}\hat{x}\hat{q}\hat{y}^*=\hat{x}\hat{q}\hat{y}^*\hat{\mu}$. Then it follows from Lemma \ref{dec} that $\hat{\lambda}=\hat{\mu}$ or $\hat{\lambda}=\overline{\hat{\mu}}$. If $\hat{\lambda}=\overline{\hat{\mu}}$, we can just update $\hat{y}$ by $j\hat{y}$, then we have $\hat{\lambda}=\hat{\mu}$. According to Corollary \ref{corollary8}, If the standard parts of $\hat{\lambda},\hat{\mu}$ are not real numbers, which is equivalent to that the standard parts of $\hat{a},\hat{b}$ are not real numbers, $\hat{x}\hat{q}\hat{y}^*$ is a dual unit complex number. Then $\{\hat{q}|\hat{a}\hat{q}=\hat{q}\hat{b}\}=\{\hat{x}^*\hat{\theta}\hat{y} |\hat{\theta}\in \mathbb{DC}\cap \hat{\mathbb U}\}$. To determine the value of $\hat{\theta}$, we need another equation $\hat{c}\hat{q}=\hat{q}\hat{d}$, i.e., we need at least two equations to solve the $AX=XB$ Hand-Eye calibration problem (\ref{AX=XB}).

The following theorem present the solution of $AX=XB$ Hand-Eye calibration problem.
\begin{theorem}\label{l5}
Let $\hat{a},\hat{b},\hat{c},\hat{d} \in \hat{\mathbb{Q}}$ and there exist $\hat{q} \in \hat{\mathbb{U}}$ such that the equations $\hat{a}\hat{q}=\hat{q}\hat{b}$ and $\hat{c}\hat{q}=\hat{q}\hat{d}$ hold. Suppose that $\hat{a}=\hat{x}^*\hat{\lambda}\hat{x}$ and $\hat{b}=\hat{y}^*\hat{\lambda}\hat{y}$, where $\hat{x},\hat{y} \in \hat{\mathbb U}$, $\hat{\lambda}\in \mathbb{DC}$ and the standard part of $\hat{\lambda}$ is not a real number. Suppose that $\hat{x}\hat{c}\hat{x}^*=c_1+c_2j+(c_3+c_4j)\varepsilon$ and $\hat{y}\hat{d}\hat{y}^*=d_1+d_2j+(d_3+d_4j)\varepsilon$.
If $c_2\ne 0$, then $\hat{q}$ has only two solutions, and the two solutions are only one sign different. 
We have
\begin{equation}
\hat{q}=\pm \hat{x}^*(\overline{\sqrt{\frac{d_2}{c_2} } } +
\frac{\overline{c_2d_4}-\overline{d_2c_4}}{2\overline{c_2}^2}\overline{\sqrt{\frac{c_2}{d_2}}}\varepsilon)\hat{y}.
\end{equation}
\end{theorem}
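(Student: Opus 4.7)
The plan is to reduce the problem to a single dual-complex-number equation and then solve it coordinate by coordinate. Using the discussion immediately preceding the theorem (which applies Corollary~\ref{corollary8} in the case where the standard part of $\hat{\lambda}$ is non-real), every solution of $\hat{a}\hat{q}=\hat{q}\hat{b}$ has the form $\hat{q}=\hat{x}^*\hat{\theta}\hat{y}$ for some $\hat{\theta}\in\mathbb{DC}\cap\hat{\mathbb{U}}$. Substituting this into $\hat{c}\hat{q}=\hat{q}\hat{d}$ and conjugating by $\hat{x}$ on the left and $\hat{y}^*$ on the right, the problem collapses to
\begin{equation*}
C\,\hat{\theta}=\hat{\theta}\,D, \qquad C:=\hat{x}\hat{c}\hat{x}^{*}=c_1+c_2 j+(c_3+c_4 j)\varepsilon,\ \ D:=\hat{y}\hat{d}\hat{y}^{*}=d_1+d_2 j+(d_3+d_4 j)\varepsilon.
\end{equation*}
So the task is to find all unit dual complex $\hat{\theta}=\theta_1+\theta_2\varepsilon$ satisfying this identity.

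Next I would expand both sides using the basic rule $j\,p=\overline{p}\,j$ for $p\in\mathbb{C}$ and match the four components. The standard-part complex component yields $c_1\theta_1=\theta_1 d_1$, forcing $c_1=d_1$; the standard-part $j$-component yields $c_2\overline{\theta_1}=\theta_1 d_2$. Combined with $|\theta_1|=1$ (the standard part of $|\hat{\theta}|=1$), the latter gives $\theta_1^{2}=c_2/d_2$, hence $\theta_1=\pm\,\overline{\sqrt{d_2/c_2}}$ (using $c_2\neq 0$). This also recovers the necessary compatibility $|c_2|=|d_2|$, which is automatic here since a solution is assumed to exist. The complex dual-part equation reduces to $c_3=d_3$ after using $c_1=d_1$, which is again automatic from the hypotheses.

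The remaining work is the $j$-dual-part equation
\begin{equation*}
c_2\overline{\theta_2}-d_2\theta_2=\theta_1 d_4-c_4\overline{\theta_1},
\end{equation*}
which I would analyze by writing $\theta_2=\theta_1\phi$. Using $\theta_1^{2}=c_2/d_2$ and $|\theta_1|=1$, the left-hand side simplifies to $\theta_1 d_2(\overline{\phi}-\phi)$, which is always purely imaginary in the $\theta_1 d_2$ direction. The unit condition $\mathrm{Re}(\overline{\theta_1}\theta_2)=0$ translates exactly to $\mathrm{Re}(\phi)=0$, so $\phi$ is purely imaginary; setting $\phi=i\alpha$ and solving the resulting real linear equation uniquely determines $\alpha$ in terms of $c_2,d_2,c_4,d_4,\theta_1$. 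A short algebraic rearrangement, using $c_2=\theta_1^{2}d_2$ to clean up, should yield
\begin{equation*}
\theta_2=\frac{\overline{c_2 d_4}-\overline{d_2 c_4}}{2\,\overline{c_2}^{\,2}}\,\overline{\sqrt{c_2/d_2}},
\end{equation*}
matching the stated expression. Finally I would observe that flipping the sign of $\theta_1$ flips the right-hand side of the $\theta_2$-equation, hence flips $\theta_2$, so the two admissible $\hat{\theta}$ differ only by an overall sign, and the same is therefore true for $\hat{q}=\hat{x}^{*}\hat{\theta}\hat{y}$.

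The main obstacle I expect is the bookkeeping in the last step: the real $2\times 2$ linear system for $(\mathrm{Re}\theta_2,\mathrm{Im}\theta_2)$ coming from the $j$-dual-part equation is singular (its determinant is $|d_2|^{2}-|c_2|^{2}=0$), so one must carefully argue consistency (guaranteed by the existence of a solution) and then use the unit-norm constraint to cut the one-parameter family of algebraic solutions down to a single value of $\theta_2$. Confirming that the closed form one obtains this way coincides with the displayed formula, after the square-root identifications $\overline{\sqrt{d_2/c_2}}=\sqrt{c_2/d_2}$ valid when $|c_2|=|d_2|$, is the only nontrivial verification.
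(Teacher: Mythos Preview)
Your proposal is correct and follows essentially the same route as the paper: reduce via Corollary~\ref{corollary8} to a unit dual complex parameter $\hat{\theta}=\theta_1+\theta_2\varepsilon$, then solve for $\theta_1$ and $\theta_2$ componentwise. The one noteworthy difference is that the paper, instead of working with the commutation form $C\hat{\theta}=\hat{\theta}D$, multiplies through by $\hat{\theta}^{*}$ and works with the conjugation form $\hat{\theta}^{*}C\hat{\theta}=D$. Because $\hat{\theta}$ is already known to be a unit, this is equivalent; but in the conjugation form the dual-part $j$-component reads simply $d_4=\overline{\theta_1}^{\,2}c_4+2\overline{\theta_1}\,\overline{\theta_2}\,c_2$, which determines $\theta_2$ by a single nondegenerate linear equation. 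In other words, the very obstacle you flag---the singular $2\times 2$ real system and the need to invoke the unit constraint separately to pin down $\theta_2$---never arises in the paper's computation, since the conjugation form has already absorbed the unit condition $\overline{\theta_1}\theta_2+\overline{\theta_2}\theta_1=0$. Your approach buys a slightly more transparent derivation of the standard-part equations, at the cost of this extra bookkeeping in the dual part; the paper's buys a one-line solution for $\theta_2$.
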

\begin{proof}
According to the analysis above, we have $$\{\hat{q}|\hat{a}\hat{q}=\hat{q}\hat{b}\}=\{\hat{x}^*\hat{\theta}\hat{y} |\hat{\theta}\in \mathbb{DC}\cap \hat{\mathbb U}\}.$$ Then the equation $\hat{\theta}^*\hat{x}\hat{c}\hat{x}^*\hat{\theta}=\hat{y}\hat{d}\hat{y}^*$ holds. Suppose that $\hat{\theta}=\theta_1+\theta_2\varepsilon$. Consider the standard part of the equation $\hat{\theta}^*\hat{x}\hat{c}\hat{x}^*\hat{\theta}=\hat{y}\hat{d}\hat{y}^*$, we have 
$$d_1+d_2j=\overline{\theta_1}(c_1+c_2j)\theta_1
=\overline{\theta_1}\theta_1c_1+\overline{\theta_1}^2c_2j.$$
It follows from $c_2\ne 0$ that $\theta_1=\pm \overline{\sqrt{\frac{d_2}{c_2}}}$.  
Consider the dual part of the equation $\hat{\theta}^*\hat{x}\hat{c}\hat{x}^*\hat{\theta}=\hat{y}\hat{d}\hat{y}^*$, we have 
\begin{align*} 
d_3+d_4j&=\overline{\theta _1}(c_3+c_4j)\theta _1+\overline{\theta _1}(c_1+c_2j)\theta _2+\overline{\theta _2}(c_1+c_2j)\theta _1
\\&=c_3+(\overline{\theta _1}\theta _2+\overline{\theta _2}\theta _1)c_1+(\overline{\theta _1}^2c_4+2\overline{\theta _1}\overline{\theta _2}c_2)j
\\&=c_3+(\overline{\theta _1}^2c_4+2\overline{\theta _1}\overline{\theta _2}c_2)j
\end{align*}
Then $\theta_2=(\overline{d_4}-\theta _1^2\overline{c_4})/(2\theta _1\overline{c_2})$.
Hence we have
\begin{equation*}
\hat{q}=\pm \hat{x}^*(\overline{\sqrt{\frac{d_2}{c_2} } } +
\frac{\overline{c_2d_4}-\overline{d_2c_4}}{2\overline{c_2}^2}\overline{\sqrt{\frac{c_2}{d_2}}}\varepsilon)\hat{y}.
\end{equation*}
\end{proof}

\subsubsection{AX=YB Hand-Eye calibration problem}
The $AX=YB$ Hand-Eye calibration problem is to solve the following problem:
\begin{equation}
A^{(i)}X=YB^{(i)}
\end{equation}
for $i=1,2,\cdots,n$, where $X$ and $Y$ represent the unknown transformation matrices from the end-effector to the camera and from the robot base to the world coordinate system respectively, $A^{(i)}$ and $B^{(i)}$ represent the transformation matrix from the robot base to the end-efector and from the world base to the camera, respectively. Suppose that the transformation matrices $X$,$Y$, $A^{(i)}$ and $B^{(i)}$ are encoded with the unit dual quaternions $\hat{x}, \hat{y}, \hat{a}_i, \hat{b}_i$, respectively, for $i=1,2,\cdots,n$. Then the $AX=YB$ Hand-Eye calibration problem can be reformulated as the following problem:
\begin{equation}
\hat{a}_i\hat{x}=\hat{y}\hat{b}_i,
\end{equation}
for $i=1,2,\cdots,n$, where $\{ \hat{a}_i \}^n_{i=1},\{ \hat{b}_i\}^n_{i=1} \subset \hat{\mathbb{U}}$ are known in advance and $\hat{x},\hat{y}$ are undetermined unit dual quaternions.

We first consider a special case of this problem and find the invariant element.

\begin{lemma}\label{lemma10}
Let $a,b\in \mathbb{C}$ and there exist $\hat{q},\hat{p} \in \hat{\mathbb{U}}$ such that the equation $(1+a\varepsilon)\hat{q}=\hat{p}(1+b\varepsilon)$ holds. Then $\Re(a)=\Re(b)$. 
\end{lemma}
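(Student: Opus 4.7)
The plan is to extract the constraint $\Re(a) = \Re(b)$ by expanding the equation according to the standard/dual decomposition and then using the unit norm condition on $\hat{q}$ and $\hat{p}$.

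First, I would write $\hat{q} = \tilde{u} + \tilde{q}_{\mathcal{I}}\varepsilon$ and $\hat{p} = \tilde{p}_{st} + \tilde{p}_{\mathcal{I}}\varepsilon$, and expand both sides of $(1+a\varepsilon)\hat{q} = \hat{p}(1+b\varepsilon)$. Matching the standard parts immediately forces $\tilde{p}_{st} = \tilde{u}$, which is a unit quaternion by the definition of $\hat{\mathbb{U}}$. Matching the dual parts then yields
\begin{equation*}
\tilde{p}_{\mathcal{I}} - \tilde{q}_{\mathcal{I}} = a\tilde{u} - \tilde{u}b.
\end{equation*}

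Next, I would invoke the unit norm condition. By Definition \ref{def8}(ii), appreciability of $\hat{q}$ and $\hat{p}$ combined with $|\hat{q}|=|\hat{p}|=1$ gives $sc(\tilde{u}^*\tilde{q}_{\mathcal{I}}) = sc(\tilde{u}^*\tilde{p}_{\mathcal{I}}) = 0$. Subtracting these two, then applying the operator $sc(\tilde{u}^*\,\cdot\,)$ to the dual-part equation above, I obtain
\begin{equation*}
0 = sc\bigl(\tilde{u}^*(a\tilde{u} - \tilde{u}b)\bigr) = sc(\tilde{u}^* a\, \tilde{u}) - sc(b),
\end{equation*}
using $\tilde{u}^*\tilde{u} = \tilde{1}$ in the second term.

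It then remains to compute $sc(\tilde{u}^* a\, \tilde{u})$. Since $a \in \mathbb{C}$ means $a = \Re(a) + \Im(a)\,i$ as a quaternion, its quaternion conjugate coincides with its complex conjugate, so $a + a^* = 2\Re(a)$, a real scalar. Using $sc(\tilde{w}) = \tfrac{1}{2}(\tilde{w} + \tilde{w}^*)$ and the antiautomorphism property of $\ast$,
\begin{equation*}
sc(\tilde{u}^* a\, \tilde{u}) = \tfrac{1}{2}\bigl(\tilde{u}^* a\, \tilde{u} + \tilde{u}^* a^* \tilde{u}\bigr) = \tilde{u}^*\Re(a)\,\tilde{u} = \Re(a),
\end{equation*}
since the real scalar $\Re(a)$ commutes with everything and $\tilde{u}^*\tilde{u} = \tilde{1}$. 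Combined with $sc(b) = \Re(b)$, this yields $\Re(a) = \Re(b)$.

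The argument is essentially bookkeeping, so I do not anticipate a true obstacle; the one subtle point to get right is that although $a \in \mathbb{C}$ does \emph{not} in general commute with a quaternion $\tilde{u}$ whose $j,k$-components are nonzero, its real part does, which is exactly what lets the $\tilde{u}^*$ and $\tilde{u}$ cancel in the final step.
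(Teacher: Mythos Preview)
Your proof is correct and takes essentially the same approach as the paper: match standard parts to get a common unit quaternion, left-multiply the dual-part equation by its conjugate, and use the unit-norm conditions to kill the cross terms. The paper writes out the conjugate equation explicitly and adds it to the original (which is exactly the operation $sc(\cdot)$ that you apply in one step), so your version is just a slightly more compact packaging of the same computation.
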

\begin{proof}
Suppose that $\hat{q}=\tilde{q}_1+\tilde{q}_2\varepsilon$ and $\hat{p}=\tilde{p}_1+\tilde{p}_2\varepsilon$. Consider the standard part of the equation $(1+a\varepsilon)\hat{q}=\hat{p}(1+b\varepsilon)$, we have $\tilde{q}_1=\tilde{p}_1$, then consider the dual part, we have 
\begin{equation}\label{aqpb1}
\tilde{q}_2+a\tilde{q}_1=\tilde{q}_1b+\tilde{p}_2.    
\end{equation}
Take conjugate on both sides of the above equation, it holds 
\begin{equation}\label{aqpb2}
\tilde{q}^*_2+\tilde{q}^*_1\overline{a}=\overline{b}\tilde{q}^*_1+\tilde{p}^*_2.
\end{equation}
Left multiply $\tilde{q}^*_1$ on both sides of the equation (\ref{aqpb1}), it holds  
\begin{equation}\label{aqpb3}
\tilde{q}^*_1\tilde{q}_2+\tilde{q}^*_1a\tilde{q}_1=b+\tilde{q}^*_1\tilde{p}_2.    
\end{equation}
Right multiply $\tilde{q}_1$ on both sides of the equation (\ref{aqpb2}), it holds  
\begin{equation}\label{aqpb4}
\tilde{q}^*_2\tilde{q}_1+\tilde{q}^*_1\overline{a}\tilde{q}_1=\overline{b}+\tilde{p}^*_2\tilde{q}_1.
\end{equation}
Since $\hat{q},\hat{p} \in \hat{\mathbb{U}}$, then 
$\tilde{q}^*_1\tilde{q}_2+\tilde{q}^*_2\tilde{q}_1=\tilde{0}$ and $\tilde{q}^*_1\tilde{p}_2+\tilde{p}^*_2\tilde{q}_1=\tilde{0}$.
Then add the equation (\ref{aqpb3}) and equation (\ref{aqpb4}), we have
$$2\Re(b)=b+\overline{b}=\tilde{q}^*_1(a+\overline{a})\tilde{q}_1=2\Re(a)\tilde{q}^*_1\tilde{q}_1=2\Re(a).$$
\end{proof}

Then we consider the general case.

\begin{theorem}\label{AX=YB}
Let $\hat{a},\hat{b}\in \hat{\mathbb{Q}}$ and there exist $\hat{q} ,\hat{p} \in \hat{\mathbb{U}}$ such that the equation $\hat{a}\hat{q}=\hat{p}\hat{b}$ holds. Suppose that $\hat{a}=\hat{x}^*\hat{\lambda}\hat{x}$ and $\hat{b}=\hat{y}^*\hat{\mu}\hat{y}$, where $\hat{x},\hat{y}\in \hat{\mathbb{U}}$, $\hat{\lambda}=\lambda_1+\lambda_2\varepsilon,\hat{\mu}=\mu_1+\mu_2\varepsilon \in \mathbb{DC}$. Assume that $\lambda_1,\mu_1 \notin \mathbb{R}$. 
Denote
\begin{equation}
Q(\hat{m})=\hat{x}^*(\overline{\lambda_1}/| \lambda_1|)(1-\Im(\lambda_2\overline{\lambda_1}/|\lambda_1|^2)i\varepsilon)\hat{m}\hat{y} 
\end{equation}
and 
\begin{equation}
P(\hat{m})=\hat{x}^*\hat{m}(1-\Im(\mu_2\overline{\mu_1}/ | \mu_1|^2)i\varepsilon)(\overline{\mu_1}/|\mu_1|)\hat{y}.    
\end{equation}
Then 
\begin{equation}\label{e32}
\{(\hat{q},\hat{p})|\hat{a}\hat{q}=\hat{p}\hat{b}\}=\{ (Q(\hat{m}),P(\hat{m}))|\hat{m}\in \hat{\mathbb U}\}.
\end{equation}
\end{theorem}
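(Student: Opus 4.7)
My approach is to conjugate away $\hat{x}$ and $\hat{y}$, then factor each of $\hat{\lambda}$ and $\hat{\mu}$ as a dual-number norm times a unit dual complex number, and finally reparametrize the resulting solution set by a single unit dual quaternion $\hat{m}$. To carry out the first step I substitute $\hat{a}=\hat{x}^{*}\hat{\lambda}\hat{x}$ and $\hat{b}=\hat{y}^{*}\hat{\mu}\hat{y}$ into $\hat{a}\hat{q}=\hat{p}\hat{b}$, then pre-multiply by $\hat{x}$ and post-multiply by $\hat{y}^{*}$. Introducing the unit dual quaternions $\hat{u}:=\hat{x}\hat{q}\hat{y}^{*}$ and $\hat{v}:=\hat{x}\hat{p}\hat{y}^{*}$, the equation becomes $\hat{\lambda}\hat{u}=\hat{v}\hat{\mu}$, and since $(\hat{q},\hat{p})\leftrightarrow(\hat{u},\hat{v})$ is a bijection on $\hat{\mathbb{U}}^{2}$ it suffices to parametrize the solution set of this reduced equation.

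Next I compute the unit part of $\hat{\lambda}$. From $|\hat{\lambda}|^{2}=\hat{\lambda}^{*}\hat{\lambda}=|\lambda_{1}|^{2}+2\Re(\overline{\lambda_{1}}\lambda_{2})\varepsilon$ and the dual-number square root, a short calculation gives
\[
\hat{\lambda}_{0}:=\frac{\hat{\lambda}}{|\hat{\lambda}|}=\frac{\lambda_{1}}{|\lambda_{1}|}\Bigl(1+\Im\bigl(\lambda_{2}\overline{\lambda_{1}}/|\lambda_{1}|^{2}\bigr)\,i\,\varepsilon\Bigr),
\]
a unit dual complex number; analogously for $\hat{\mu}_{0}:=\hat{\mu}/|\hat{\mu}|$. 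Because the dual-quaternion modulus is multiplicative, equating moduli in $\hat{\lambda}\hat{u}=\hat{v}\hat{\mu}$ forces $|\hat{\lambda}|=|\hat{\mu}|$ (whose dual part is precisely the compatibility recorded by Lemma \ref{lemma10}), and since this scalar is appreciable it may be cancelled, reducing the equation to $\hat{\lambda}_{0}\hat{u}=\hat{v}\hat{\mu}_{0}$.

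With this reduction in place I set $\hat{m}:=\hat{\lambda}_{0}\hat{u}=\hat{v}\hat{\mu}_{0}$. Since $|\hat{\lambda}_{0}|=1$ this is a unit dual quaternion, and inverting gives $\hat{u}=\hat{\lambda}_{0}^{-1}\hat{m}$ and $\hat{v}=\hat{m}\hat{\mu}_{0}^{-1}$. All factors of $\hat{\lambda}_{0}$ and $\hat{\mu}_{0}$ lie in $\mathbb{DC}$ and therefore commute freely, so
\[
\hat{\lambda}_{0}^{-1}=\frac{\overline{\lambda_{1}}}{|\lambda_{1}|}\Bigl(1-\Im\bigl(\lambda_{2}\overline{\lambda_{1}}/|\lambda_{1}|^{2}\bigr)\,i\,\varepsilon\Bigr),\qquad \hat{\mu}_{0}^{-1}=\Bigl(1-\Im\bigl(\mu_{2}\overline{\mu_{1}}/|\mu_{1}|^{2}\bigr)\,i\,\varepsilon\Bigr)\frac{\overline{\mu_{1}}}{|\mu_{1}|}.
\]
Substituting back through $\hat{q}=\hat{x}^{*}\hat{u}\hat{y}$ and $\hat{p}=\hat{x}^{*}\hat{v}\hat{y}$ yields exactly $(\hat{q},\hat{p})=(Q(\hat{m}),P(\hat{m}))$, giving the inclusion $\subseteq$ in (\ref{e32}). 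Conversely, for any $\hat{m}\in\hat{\mathbb{U}}$, the pair $(Q(\hat{m}),P(\hat{m}))$ is visibly a pair of unit dual quaternions, and a direct telescoping cancellation of $\hat{\lambda}_{0}\hat{\lambda}_{0}^{-1}=1$ and $\hat{\mu}_{0}^{-1}\hat{\mu}_{0}=1$ inside $\mathbb{DC}$ shows that it satisfies $\hat{\lambda}_{0}\hat{u}=\hat{m}=\hat{v}\hat{\mu}_{0}$ and hence $\hat{a}\hat{q}=\hat{p}\hat{b}$.

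The main technical step is obtaining the explicit unit factorisation of $\hat{\lambda}_{0}$; once that is in hand everything afterwards is routine bookkeeping of commuting dual-complex scalars. The hypothesis $\lambda_{1},\mu_{1}\notin\mathbb{R}$ enters through Corollary \ref{corollary8} to pin down the diagonalisations $\hat{a}=\hat{x}^{*}\hat{\lambda}\hat{x}$ and $\hat{b}=\hat{y}^{*}\hat{\mu}\hat{y}$, and it also guarantees $\lambda_{1},\mu_{1}\neq 0$ so that the unit complex prefactors $\overline{\lambda_{1}}/|\lambda_{1}|$ and $\overline{\mu_{1}}/|\mu_{1}|$ appearing in $Q$ and $P$ are well-defined.
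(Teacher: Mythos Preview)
Your proof is correct and follows essentially the same strategy as the paper: conjugate by $\hat{x},\hat{y}$ to reduce to $\hat{\lambda}\hat{u}=\hat{v}\hat{\mu}$, factor each of $\hat{\lambda},\hat{\mu}$ as a central dual number times a unit dual complex number, and then parametrize. Your execution is slightly cleaner in two places. First, you obtain $|\hat{\lambda}|=|\hat{\mu}|$ in one stroke from multiplicativity of the dual-quaternion modulus, whereas the paper extracts $|\lambda_1|=|\mu_1|$ from the standard part and then invokes Lemma~\ref{lemma10} separately for the dual-part identity $\Re(\lambda_2\overline{\lambda_1})=\Re(\mu_2\overline{\mu_1})$. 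Second, you define $\hat{m}:=\hat{\lambda}_0\hat{u}$ directly and invert, whereas the paper posits the forms $Q(\hat{m}),P(\hat{n})$ with two parameters and then checks that the equation forces $\hat{m}=\hat{n}$. One small remark: the hypothesis $\lambda_1,\mu_1\notin\mathbb{R}$ is not needed to ``pin down'' the diagonalisations via Corollary~\ref{corollary8} (the decompositions $\hat{a}=\hat{x}^*\hat{\lambda}\hat{x}$ and $\hat{b}=\hat{y}^*\hat{\mu}\hat{y}$ are part of the hypotheses), and for this theorem only $\lambda_1,\mu_1\neq 0$ is actually used; the non-reality assumption matters only for the subsequent reduction to the $AX=XB$ problem.
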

\begin{proof}
Since $$\lambda_1+\lambda_2\varepsilon=(\left | \lambda_1 \right |+\lambda_2
\overline{\lambda_1}/\left | \lambda_1 \right |\varepsilon)(\lambda_1/\left | \lambda_1 \right |)$$ and $$\mu_1+\mu_2\varepsilon=(\mu_1/\left | \mu_1
\right |)(\left | \mu_1 \right |+\mu_2 \overline{\mu_1}/\left | \mu_1 \right |\varepsilon),$$ then
\begin{equation*}
\hat{x}^*(\left | \lambda_1 \right |+\lambda_2
\overline{\lambda_1}/\left | \lambda_1 \right |\varepsilon)(\lambda_1/\left | \lambda_1 \right |) \hat{x}\hat{q}
=\hat{p}\hat{y}^*(\mu_1/\left | \mu_1
\right |)(\left | \mu_1 \right |+\mu_2
\overline{\mu_1}/\left | \mu_1 \right |\varepsilon) \hat{y}.
\end{equation*}
It is equivalent to 
\begin{equation}\label{AXYB}
(\left | \lambda_1 \right |+\lambda_2
\overline{\lambda_1}/\left | \lambda_1 \right |\varepsilon)(\lambda_1/\left | \lambda_1 \right | ) \hat{x}\hat{q}\hat{y}^*
=\hat{x}\hat{p} \hat{y}^*(\mu_1/\left | \mu_1
\right |)(\left | \mu_1 \right |+\mu_2
\overline{\mu_1}/\left | \mu_1 \right |\varepsilon ).
\end{equation}
Consider the standard part of the above equation, we have $\lambda_1 (\hat{x}\hat{q}\hat{y}^*)_{st}=(\hat{x}\hat{p} \hat{y}^*)_{st}\mu_1,$ then $\left | \lambda_1 \right |=\left | \mu_1 \right |$.
Since both $(\lambda_1/\left | \lambda_1 \right | ) \hat{x}\hat{q}\hat{y}^*$ and $\hat{x}\hat{p} \hat{y}^*(\mu_1/\left | \mu_1 \right |)$ are unit dual quaternions, we have $\Re(\lambda_2\overline{\lambda_1})=\Re(\mu_2\overline{\mu_1})$, according to Lemma \ref{lemma10}.

Suppose that
$$\hat{q}=\hat{x}^*(\overline{\lambda_1}/|\lambda_1|)(1-\Im(\lambda_2\overline{\lambda_1}/ |\lambda_1|^2)i\varepsilon)\hat{m}\hat{y}$$ and $$\hat{p}=\hat{x}^*\hat{n}(1-\Im(\mu_2\overline{\mu_1}/|\mu_1 |^2)i\varepsilon)(\overline{\mu_1}/|\mu_1|)\hat{y},$$
where $\hat{m},\hat{n}\in \hat{\mathbb U}$.
Since \begin{equation*}
|\lambda_1|+\Re(\lambda_2\overline{\lambda_1}/|\lambda_1|)\varepsilon=(|\lambda_1|+\lambda_2\overline{\lambda_1}/ | \lambda_1|\varepsilon)(1-\Im(\lambda_2\overline{\lambda_1}/ | \lambda_1 |^2)i\varepsilon)
\end{equation*}
and 
\begin{equation*}
|\mu_1|+\Re(\mu_2\overline{\mu_1}/|\mu_1|)\varepsilon=(1-\Im(\mu_2\overline{\mu_1}/|\mu_1|^2)i\varepsilon)(|\mu_1|+\mu_2\overline{\mu_1}/|\mu_1|\varepsilon),
\end{equation*}
incorporate the expression of $\hat{p}$ and $\hat{q}$ into the equation (\ref{AXYB}), we get $\hat{m} = \hat{n}$. 
Then $\{ (\hat{q},\hat{p})|\hat{a}\hat{q}=\hat{p}\hat{b}\}=
\{ (Q(\hat{m}),P(\hat{m})) | \hat{m}\in \hat{\mathbb U} \}.$
\end{proof}

We consider the case that $\hat{a},\hat{b}\in \hat{\mathbb{Q}}$ and there exist $\hat{q},\hat{p}\in\hat{\mathbb{U}}$ such that the equation $\hat{a}\hat{q}=\hat{p}\hat{b}$ holds. According to Theorem \ref{AX=YB}, to solve $\hat{q}$ and $\hat{p}$ we need to determine $\hat{m}$ in expresion (\ref{e32}). Then to solve this $AX=YB$ Hand-Eye calibration problem, we require more equations. Let $\hat{a}^{(k)},\hat{b}^{(k)} \in \hat{\mathbb{Q}}$, $k=1,2,\cdots,n$. Suppose that it further holds $\hat{a}^{(k)}\hat{q}=\hat{p}\hat{b}^{(k)}$ for $k=1,2,\cdots,n$.

The equation $\hat{a}^{(k)}\hat{q}=\hat{p}\hat{b}^{(k)}$ is equilvalent to  
\begin{align*} 
\hat{a}^{(k)}\hat{x}^*(\overline{\lambda_1}/|\lambda_1|)(1-\Im(\lambda_2\overline{\lambda_1}/|\lambda_1|^2)i\varepsilon)\hat{m}\hat{y}
\\=\hat{x}^*\hat{m}(1-\Im(\mu_2\overline{\mu_1}/|\mu_1|^2)i\varepsilon)(\overline{\mu_1}/|\mu_1|)\hat{y}\hat{b}^{(k)}.
\end{align*}
Let $$\hat{c}^{(k)}=\hat{x}\hat{a}^{(k)}\hat{x}^*(\overline{\lambda_1}/|\lambda_1|)(1-\Im(\lambda_2\overline{\lambda_1}/| \lambda_1|^2)i\varepsilon),$$ and
$$\hat{d}^{(k)}=(1-\Im(\mu_2\overline{\mu_1}/|\mu_1 |^2)i\varepsilon)(\overline{\mu_1}/|\mu_1|)\hat{y}\hat{b}^{(k)}\hat{y}^*.$$
Then to determine the expression of $\hat{m}$, we just need to solve $$\hat{c}^{(k)}\hat{m}=
\hat{m}\hat{d}^{(k)},$$
for $k=1,2,\cdots,n$, which is a $AX=XB$ Hand-Eye calibration problem, and we have already present the solution of $AX=XB$ Hand-Eye calibration problems in the above subsection.

\subsection{Rayleigh quotient iteration method baesd on dual complex adjoint matrix}

In this subsection, we utilize dual complex adjoint matrices to solve dual quaternion linear equations systems, and then we combine the Rayleigh quotient iteration method with dual complex adjoint matrix.

\subsubsection{Dual quaternion linear equations system}

Linear equations system is an important issue in matrix theory. In this part, we utilize dual complex adjoint matrix to study the problem of dual quaternion linear equations systems.

First, we define the mapping $\mathcal{F}$ from the set $\hat{\mathbb{Q}}^{n\times 1}$ to the set $\mathbb{DC}^{2n\times 1}$ as 
\begin{equation}
\mathcal{F}(\mathbf v_{1}+\mathbf v_{2} j+(\mathbf v_{3}+\mathbf v_{4} j))= 
\begin{bmatrix}
    \mathbf v_{1}\\
    -\overline{\mathbf v_{2}} 
    \end{bmatrix}+
    \begin{bmatrix}
    \mathbf v_{3}\\
    -\overline{\mathbf v_{4}} 
    \end{bmatrix}\varepsilon.  
\end{equation}
Mapping $\mathcal{F}$ is a bijection, and its inverse mapping is
\begin{equation}
\mathcal{F}^{-1}\left ( \begin{bmatrix}
    \mathbf u_{1}\\
    \mathbf u_{2}
    \end{bmatrix}+
    \begin{bmatrix}
    \mathbf u_{3}\\
    \mathbf u_{4}
    \end{bmatrix}\varepsilon \right )=
    \mathbf u_{1}-\overline{\mathbf u_{2}} j+
(\mathbf u_{3}-\overline{\mathbf u_{4}} j  ) \varepsilon,
\end{equation}
where $\mathbf u_{1},\mathbf u_{2},\mathbf u_{3},\mathbf u_{4}
\in \mathbb C^{n\times 1}$.
We define the mapping $\mathcal{G}$ from the set $\hat{\mathbb{Q}}^{n\times 1}$ to the set $\mathbb{DC}^{2n\times 1}$ as 
\begin{equation}
\mathcal{G}(\mathbf v_{1}+\mathbf v_{2} j+(\mathbf v_{3}+\mathbf v_{4} j))= 
\begin{bmatrix}
    \mathbf v_{2}\\
    \overline{\mathbf v_{1}} 
    \end{bmatrix}+
    \begin{bmatrix}
    \mathbf v_{4}\\
    \overline{\mathbf v_{3}} 
    \end{bmatrix}\varepsilon.  
\end{equation}
Then $\mathcal{J}(\hat{\mathbf v})=[\mathcal{F}(\hat{\mathbf v})~\mathcal{G}(\hat{\mathbf v})]$, for any $\hat{\mathbf v}\in\hat{\mathbb{Q}}^{n\times 1}$.

\begin{theorem}\label{theorem3.2}
Let $\hat{\mathbf{Q}}\in\hat{ \mathbb{Q}}^{m\times n}$, $\hat{\mathbf v}\in\hat{ \mathbb{Q}}^{n\times 1}$, and $\hat{\mathbf u}\in\hat{\mathbb{Q}}^{m\times 1}$, then
\begin{equation}
\hat{\mathbf{Q}}\hat{\mathbf v}=\hat{\mathbf u}
\end{equation} is equivalent to
\begin{equation}
\mathcal{J}(\hat{\mathbf{Q}})\mathcal{F}(\hat{\mathbf v})=\mathcal{F}(\hat{\mathbf u}).
\end{equation}
\end{theorem}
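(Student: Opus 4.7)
The plan is to reduce this statement to the multiplicative property of $\mathcal{J}$ that is already proved in Lemma \ref{lemma3.2}(ii), combined with the column-decomposition $\mathcal{J}(\hat{\mathbf v})=[\mathcal{F}(\hat{\mathbf v})~\mathcal{G}(\hat{\mathbf v})]$ that is noted right before the theorem. The key intermediate identity I would prove is
\begin{equation*}
\mathcal{J}(\hat{\mathbf Q})\,\mathcal{F}(\hat{\mathbf v}) \;=\; \mathcal{F}(\hat{\mathbf Q}\hat{\mathbf v})
\end{equation*}
for every $\hat{\mathbf Q}\in\hat{\mathbb Q}^{m\times n}$ and $\hat{\mathbf v}\in\hat{\mathbb Q}^{n\times 1}$. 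Once this is established, the theorem is essentially free: both directions of the equivalence follow from the bijectivity of $\mathcal{F}$, since $\hat{\mathbf Q}\hat{\mathbf v}=\hat{\mathbf u}$ holds if and only if $\mathcal{F}(\hat{\mathbf Q}\hat{\mathbf v})=\mathcal{F}(\hat{\mathbf u})$.

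To prove the key identity, I would view $\hat{\mathbf v}$ as a dual quaternion matrix of size $n\times 1$ and apply Lemma \ref{lemma3.2}(ii) to obtain
\begin{equation*}
\mathcal{J}(\hat{\mathbf Q}\hat{\mathbf v})=\mathcal{J}(\hat{\mathbf Q})\,\mathcal{J}(\hat{\mathbf v}).
\end{equation*}
Now $\mathcal{J}(\hat{\mathbf v})$ is a $2n\times 2$ dual complex matrix whose two columns are exactly $\mathcal{F}(\hat{\mathbf v})$ and $\mathcal{G}(\hat{\mathbf v})$, as remarked in the text just before the theorem. Therefore
\begin{equation*}
\mathcal{J}(\hat{\mathbf Q})\,\mathcal{J}(\hat{\mathbf v})
=\bigl[\,\mathcal{J}(\hat{\mathbf Q})\mathcal{F}(\hat{\mathbf v})\ \ \mathcal{J}(\hat{\mathbf Q})\mathcal{G}(\hat{\mathbf v})\,\bigr],
\end{equation*}
while $\mathcal{J}(\hat{\mathbf Q}\hat{\mathbf v})=[\mathcal{F}(\hat{\mathbf Q}\hat{\mathbf v})\ \ \mathcal{G}(\hat{\mathbf Q}\hat{\mathbf v})]$. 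Comparing first columns gives the desired identity.

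With this in hand, the two implications are immediate. For the forward direction, applying $\mathcal{F}$ to $\hat{\mathbf Q}\hat{\mathbf v}=\hat{\mathbf u}$ and substituting the identity yields $\mathcal{J}(\hat{\mathbf Q})\mathcal{F}(\hat{\mathbf v})=\mathcal{F}(\hat{\mathbf u})$. For the converse, the identity rewrites the given equation as $\mathcal{F}(\hat{\mathbf Q}\hat{\mathbf v})=\mathcal{F}(\hat{\mathbf u})$, whence $\hat{\mathbf Q}\hat{\mathbf v}=\hat{\mathbf u}$ by injectivity of $\mathcal{F}$. There is no genuine obstacle here; the only modest subtlety is recognizing that Lemma \ref{lemma3.2}(ii) is stated for matrix products in general and therefore applies when the right factor is a column (treated as an $n\times 1$ matrix), so that the column-split interpretation of $\mathcal{J}(\hat{\mathbf v})$ is legitimate.
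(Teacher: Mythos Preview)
Your proof is correct and, for the forward direction, essentially identical to the paper's: both apply Lemma~\ref{lemma3.2}(ii) to get $\mathcal{J}(\hat{\mathbf Q}\hat{\mathbf v})=\mathcal{J}(\hat{\mathbf Q})\mathcal{J}(\hat{\mathbf v})$ and then read off the first column via $\mathcal{J}(\hat{\mathbf v})=[\mathcal{F}(\hat{\mathbf v})~\mathcal{G}(\hat{\mathbf v})]$.

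The converse is where the two diverge. You package the forward computation as the universal identity $\mathcal{J}(\hat{\mathbf Q})\mathcal{F}(\hat{\mathbf v})=\mathcal{F}(\hat{\mathbf Q}\hat{\mathbf v})$, valid for all $\hat{\mathbf v}$, and then invoke injectivity of $\mathcal{F}$ to finish. The paper does not isolate this identity; instead it assumes $\mathcal{J}(\hat{\mathbf Q})\mathcal{F}(\hat{\mathbf v})=\mathcal{F}(\hat{\mathbf u})$, writes out the $2\times 2$ block structure explicitly, and shows by conjugating the resulting scalar equations that $\mathcal{J}(\hat{\mathbf Q})\mathcal{G}(\hat{\mathbf v})=\mathcal{G}(\hat{\mathbf u})$ also holds, whence $\mathcal{J}(\hat{\mathbf Q})\mathcal{J}(\hat{\mathbf v})=\mathcal{J}(\hat{\mathbf u})$ and bijectivity of $\mathcal{J}$ gives the conclusion. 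Your route is shorter and more conceptual; the paper's block computation is more explicit and in effect re-derives, in coordinates, the symmetry between the $\mathcal{F}$- and $\mathcal{G}$-columns that your argument gets for free from the column split of $\mathcal{J}(\hat{\mathbf Q}\hat{\mathbf v})$.
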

\begin{proof}
Since the mapping $\mathcal{J}$ is a bijection, then
$\hat{\mathbf{Q}}\hat{\mathbf v}-\hat{\mathbf u}=\hat{\mathbf O}$
is equivalent to $\mathcal{J}(\hat{\mathbf{Q}}\hat{\mathbf v}-\hat{\mathbf u})=\hat{O}$.
It follows from (ii) and (iii) in Lemma \ref{lemma3.2} that
$$\mathcal{J}(\hat{\mathbf{Q}}\hat{\mathbf v}-\hat{\mathbf u})=\mathcal{J}(\hat{\mathbf{Q}})\mathcal{J}(\hat{\mathbf v})-\mathcal{J}(\hat{\mathbf u})=\mathcal{J}(\hat{\mathbf{Q}})[\mathcal{F}(\hat{\mathbf v})~\mathcal{G}(\hat{\mathbf v})]-[\mathcal{F}(\hat{\mathbf u}),\mathcal{G}(\hat{\mathbf u})].$$
Therefore, if
$\hat{\mathbf{Q}}\hat{\mathbf v}=\hat{\mathbf u}$
, then it holds $\mathcal{J}(\hat{\mathbf{Q}})\mathcal{F}(\hat{\mathbf v})=\mathcal{F}(\hat{\mathbf u})$.

On the other hand, if $\mathcal{J}(\hat{\mathbf{Q}})\mathcal{F}(\hat{\mathbf v})=\mathcal{F}(\hat{\mathbf u})$, suppose that
$$\mathcal{J}(\hat{\mathbf{Q}})=\begin{bmatrix}
\hat{P}_1  & \hat{P}_2 \\
-\overline{\hat{P}_2} & \overline{\hat{P}_1} 
\end{bmatrix}, \mathcal{F}(\hat{\mathbf v})=\begin{bmatrix}
\hat{\mathbf v}_1 \\
-\overline{\hat{\mathbf v}_2} 
\end{bmatrix}, \mathcal{F}(\hat{\mathbf u})=\begin{bmatrix}
\hat{\mathbf u}_1 \\
-\overline{\hat{\mathbf u}_2} 
\end{bmatrix},$$ then,
$$\hat{P}_1\hat{\mathbf v}_1-\hat{P}_2\overline{\hat{\mathbf v}_2}=\hat{\mathbf u}_1,
-\overline{\hat{P}_2}\hat{\mathbf v}_1 - \overline{\hat{P}_1}\overline{\hat{\mathbf v}_2}=-\overline{\hat{\mathbf u}_2}.$$
Therefore,
$$\hat{P}_1\hat{\mathbf v}_2+\hat{P}_2\overline{\hat{\mathbf v}_1}=\hat{\mathbf u}_2,
-\overline{\hat{P}_2}\hat{\mathbf v}_2 +\overline{\hat{P}_1}\overline{\hat{\mathbf v}_1}=\overline{\hat{\mathbf u}_1}.$$
Then it holds $\mathcal{J}(\hat{\mathbf{Q}})\mathcal{G}(\hat{\mathbf v})=\mathcal{G}(\hat{\mathbf u})$.
Thus $\mathcal{J}(\hat{\mathbf{Q}})\mathcal{J}(\hat{\mathbf v})=\mathcal{J}(\hat{\mathbf u})$, then $\hat{\mathbf{Q}}\hat{\mathbf v}=\hat{\mathbf u}$.
\end{proof}

Theorem \ref{theorem3.2} facilitates the transformation of solving a dual quaternion linear equations system into the more manageable task of solving a dual complex linear equations system.
Under the assumption of Theorem \ref{theorem3.2}, suppose that $\hat{P}=\mathcal{J}(\hat{\mathbf Q})=P_1+P_2\varepsilon$, $\hat{\mathbf x}=\mathcal{F}(\hat{\mathbf v})=\mathbf x_1+\mathbf x_2\varepsilon$ and $\hat{\mathbf y}=\mathcal{F}(\hat{\mathbf u})=\mathbf y_1+\mathbf y_2\varepsilon$, then solving the dual complex linear equations system $\hat{P}\hat{\mathbf x}=\hat{\mathbf y}$ is equivalent to solving the following linear equations system: 
$$\left\{
\begin{aligned}
&P_1\mathbf x_1=\mathbf y_1,\\
&P_1\mathbf x_2+P_2\mathbf x_1=\mathbf y_2.
\end{aligned}
\right.$$
To solve the above linear equations system, one only needs to first solve $P_1\mathbf x_1=\mathbf y_1$ to obtain $\mathbf x_1$, and then solve $P_1\mathbf x_2=(\mathbf y_2-P_2\mathbf x_1)$ to get $\mathbf x_2$.

\subsubsection{Improve Rayleigh quotient iteration method by dual complex adjoint matrix}
Leveraging the properties of the dual complex adjoint matrix as previously analyzed, we integrate it with the Rayleigh quotient iteration method to compute the eigenvalues of a dual quaternion Hermitian matrix.

The Rayleigh quotient iteration method for computing the extreme eigenvalues of a dual quaternion Hermitian matrix requires solving a dual quaternion linear equations system  $(\hat{\mathbf{Q}}-\hat{\lambda}^{(k-1)}\hat{\mathbf{I}})\hat{\mathbf{u}}^{(k)}=
\hat{\mathbf{v}}^{(k-1)}$ at the $k$-th iteration step.
Let $\hat{P}=\mathcal{J}(\hat{\mathbf{Q}})$, $\hat{\mathbf x}^{(k)}=\mathcal{F}(\hat{\mathbf v}^{(k)})$, $\hat{\mathbf y}^{(k)}=\mathcal{F}(\hat{\mathbf u}^{(k)})$, then it follow from Theorem \ref{theorem3.2}, it is equivalent to solving
$(\hat{P}-\hat{\lambda}^{(k-1)}\hat{I})\hat{\mathbf{y}}^{(k)}=
\hat{\mathbf{x}}^{(k-1)}$. Suppose that $\hat{P}=P_{st}+P_{\mathcal I}\varepsilon$, $\hat{\mathbf x}^{(k)}=\mathbf x^{(k)}_{st}+\mathbf x^{(k)}_{\mathcal I}\varepsilon$, $\hat{\mathbf y}^{(k)}=\mathbf y^{(k)}_{st}+\mathbf y^{(k)}_{\mathcal I}\varepsilon$, $\hat{\lambda}^{(k)}=\mathbf \lambda^{(k)}_{st}+\mathbf \lambda^{(k)}_{\mathcal I}\varepsilon$. Then we just need to solve the linear equations system 
$$\left\{
\begin{aligned}
&(P_{st}-\lambda^{(k-1)}_{st}I)\mathbf y^{(k)}_{st}=\mathbf x^{(k-1)}_{st},\\
&(P_{st}-\lambda^{(k-1)}_{st}I)\mathbf y^{(k)}_{\mathcal I}=\mathbf u^{(k-1)}_{\mathcal I}-(P_{\mathcal I}-\lambda^{(k-1)}_{\mathcal I} I)\mathbf y^{(k)}_{st}.
\end{aligned}
\right.$$

Hence, we obtain the Rayleigh quotient iteration method based on the dual complex adjoint matrix (Algorithm \ref{algorithm3.3}).

\begin{algorithm}[ht]
    \caption{The Rayleigh quotient iteration method for calculating the extreme eigenvalues of dual quaternion Hermitian matrices based on dual complex adjoint matrix} \label{algorithm3.3}
    \begin{algorithmic}
    \REQUIRE $\hat{\mathbf{Q}}\in \hat{\mathbb H}^n$, initial iteration point $\hat{\mathbf v}^{(0)}\in \hat{\mathbb Q}^{n\times 1}_2$, the maximum iteration number $k_{max}$ and the tolerance $\delta$. 
    \ENSURE $\hat{\mathbf v}^{(k)}$ and $\hat{\lambda}^{(k-1)}$.
    \STATE Compute $\hat{P}=\mathcal{J}(\hat{\mathbf{Q}})$ and
    $\hat{\mathbf x}^{(0)}=\mathcal{F}(\hat{\mathbf v}^{(0)})$.
    \FOR {$k = 1$ to $k_{max}$} 
    \STATE Update $\hat{\lambda}^{(k-1)}=(\hat{\mathbf{v}}^{(k-1)})^*\hat{\mathbf{Q}}\hat{\mathbf{v}}^{(k-1)}$.
    \STATE Solve $(P_{st}-\lambda^{(k-1)}_{st}I)\mathbf y^{(k)}_{st}=\mathbf x^{(k-1)}_{st}$.
    \STATE Solve $(P_{st}-\lambda^{(k-1)}_{st}I)\mathbf y^{(k)}_{\mathcal I}=\mathbf u^{(k-1)}_{\mathcal I}-(P_{\mathcal I}-\lambda^{(k-1)}_{\mathcal I} I)\mathbf y^{(k)}_{st}$.
    \STATE Update $\hat{\mathbf x}^{(k)}=\frac{\hat{\mathbf y}^{(k)}}{\left \| \hat{\mathbf y}^{(k)}\right \|_2 }$.
    \STATE Compute $\hat{\mathbf v}^{(k)}= \mathcal{F}^{-1}(\hat{\mathbf x}^{(k)})$.
    \STATE If $\| \hat{\mathbf y}^{(k)} -\hat{\mathbf x}^{(k-1)}\hat{\lambda}^{(k-1)}\| _{2^R}\le \delta \times \| \hat{\mathbf{Q}}\| _{F^R}$, then Stop.
    \ENDFOR
    \end{algorithmic}
\end{algorithm}

Note that the Algorithm \ref{algorithm3.3} only modifies the solution process for the dual quaternion linear equations system within the algorithm, compared to the original algorithm (Algorithm \ref{RQI}), thereby ensuring that the convergence properties of the Rayleigh quotient iteration method are preserved.

In the following analysis, we delve into the enhancement in the number of floating-point operations achieved by fusing dual complex adjoint matrices. The variant of Algorithm \ref{algorithm3.3} differs solely in the approach to solving the dual quaternion linear equations system, as compared to the original Rayleigh quotient iteration method. Consequently, our focus shifts to examining the computational disparities between these two algorithms in addressing a single dual quaternion linear equations system.

\begin{theorem}\label{R1}
Let $\hat{\mathbf Q}\in \hat{\mathbb{H}}^n$, then in every iteration, Rayleigh quotient iteration method requires $\frac{128}{3}n^3+O(n^2)$ floating-point calculations to solve a dual quaternion linear equations system, while Algorithm \ref{algorithm3.3} requires $\frac{64}{3}n^3+O(n^2)$ floating-point calculations.
\end{theorem}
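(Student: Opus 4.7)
The plan is to show that in both methods the per-iteration cost for the linear-system solve is dominated by a single LU-style factorization of the coefficient matrix, followed by two triangular back-substitutions whose combined cost is $O(n^2)$. I would therefore reduce the whole comparison to computing the leading-order cost of one LU factorization in each setting, and then add the lower-order work separately and absorb it into $O(n^2)$.

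First I would analyze Algorithm \ref{RQI}. Following the construction recalled in Section \ref{Preliminaries}, solving $(\hat{\mathbf{Q}}-\hat{\lambda}^{(k-1)}\hat{\mathbf{I}})\hat{\mathbf{u}}^{(k)}=\hat{\mathbf{v}}^{(k-1)}$ is rewritten as $\hat{\mathbf{Q}}^{D}\hat{\mathbf{x}}_{c}^{D}=\hat{\mathbf{y}}_{c}^{D}$, which is a dual-number system of size $4n\times 4n$. Writing $\hat{\mathbf{Q}}^D=Q_1+Q_2\varepsilon$ with $Q_1,Q_2\in\mathbb{R}^{4n\times 4n}$, this further decouples into two real systems sharing the coefficient matrix $Q_1$: first $Q_1\mathbf{x}_1=\mathbf{y}_1$, then $Q_1\mathbf{x}_2=\mathbf{y}_2-Q_2\mathbf{x}_1$. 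Under the standard convention that an LU factorization of an $N\times N$ real matrix costs $\tfrac{2}{3}N^{3}+O(N^{2})$ flops and that each triangular solve costs $O(N^{2})$ flops, the dominant work is the single LU of $Q_1$, which gives $\tfrac{2}{3}(4n)^{3}=\tfrac{128}{3}n^{3}$. The back-substitutions and the matrix--vector product $Q_2\mathbf{x}_1$ each contribute only $O(n^{2})$. Summing yields $\tfrac{128}{3}n^{3}+O(n^{2})$.

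Next I would carry out the parallel count for Algorithm \ref{algorithm3.3}. Here the work is to solve two dual-complex systems with the common $2n\times 2n$ complex coefficient matrix $P_{st}-\lambda_{st}^{(k-1)}I$, namely $(P_{st}-\lambda_{st}^{(k-1)}I)\mathbf{y}_{st}^{(k)}=\mathbf{x}_{st}^{(k-1)}$ and $(P_{st}-\lambda_{st}^{(k-1)}I)\mathbf{y}_{\mathcal{I}}^{(k)}=\mathbf{u}_{\mathcal{I}}^{(k-1)}-(P_{\mathcal{I}}-\lambda_{\mathcal{I}}^{(k-1)}I)\mathbf{y}_{st}^{(k)}$. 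I would count a single complex multiplication as $4$ real multiplications plus $2$ real additions and a complex addition as $2$ real additions, so that the LU factorization of an $N\times N$ complex matrix costs $\tfrac{8}{3}N^{3}+O(N^{2})$ real flops. With $N=2n$ this gives $\tfrac{8}{3}(2n)^{3}=\tfrac{64}{3}n^{3}$; the remaining triangular solves and the complex matrix--vector multiplication $(P_{\mathcal{I}}-\lambda_{\mathcal{I}}^{(k-1)}I)\mathbf{y}_{st}^{(k)}$ are again $O(n^{2})$. Comparing the two totals establishes the claimed $\tfrac{128}{3}n^{3}$ versus $\tfrac{64}{3}n^{3}$ leading terms.

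The main obstacle I anticipate is bookkeeping rather than mathematics: one must fix a consistent flop convention (especially how a complex scalar operation is charged against real operations) and ensure that both analyses reuse the common factorization for the second right-hand side, so that the $n^{3}$ term really does come from a \emph{single} factorization in each algorithm. Provided this is done carefully, the structure of the argument is simply a head-to-head comparison of LU costs at sizes $4n$ (real) versus $2n$ (complex), and the factor-of-two improvement falls out because $\tfrac{8}{3}(2n)^{3}=\tfrac{1}{2}\cdot\tfrac{2}{3}(4n)^{3}$.
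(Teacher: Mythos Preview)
Your proposal is correct and follows essentially the same route as the paper: reduce each method to a single LU factorization (real of size $4n$ for Algorithm \ref{RQI}, complex of size $2n$ for Algorithm \ref{algorithm3.3}) shared across the two right-hand sides, count the leading $N^{3}$ term with the standard complex-to-real conversion, and absorb the triangular solves and auxiliary matrix--vector products into $O(n^{2})$. The paper does exactly this, only writing out the intermediate addition/multiplication/division counts more explicitly.
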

\begin{proof}
By Algorithm\ref{RQI}, in every iteration, the original Rayleigh quotient iteration method requires to solve two real linear equations systems with a dimension of $4n \times 4n$. By Algorithm\ref{algorithm3.3}, in every iteration, Algorithm \ref{algorithm3.3} requires to solve two complex linear equations systems with a dimension of $2n \times 2n$. Since the coefficient matrices of two linear equations systems are the same, then taking LU decomposition as an example for solving linear equations systems, we just require to perform one LU decomposition in every iteration.

Solving an $n\times n$ linear equations system requires a total of $\frac{1}{6}n(n-1)(2n-1)$ additions, $\frac{1}{6}n(n-1)(2n-1)$ multiplications, and $\frac{1}{2}n(n-1)$ divisions to perform LU decomposition and requires further $n(n-1)$ additions, $n(n-1)$ multiplications and $2n$ divisions to solve linear equations system. Therefore, in every iteration, the original Rayleigh quotient iteration method requires a total of $\frac{64}{3}n^3+24n^2-\frac{22}{3}n$ additions, $\frac{64}{3}n^3+24n^2-\frac{22}{3}n$ multiplications, and $8n^2+14n$ divisions, and in summary, $\frac{128}{3}n^3+O(n^2)$ floating-point calculations.

Considering the arithmetic operations on complex numbers, addition of two complex numbers requires two real additions, multiplication of two complex numbers requires four real multiplications and two real additions, and division of two complex numbers requires six real multiplications, three real additions, and two real divisions. Therefore, in every iteration, Algorithm \ref{algorithm3.3} requires a total of $\frac{32}{3}n^3+30n^2+\frac{19}{3}n$ real additions, $\frac{32}{3}n^3+36n^2+\frac{82}{3}n$ real multiplications, and $4n^2+14n$ real divisions, and in summary, $\frac{64}{3}n^3+O(n^2)$ floating-point calculations.
\end{proof}

By Theorem \ref{R1}, Algorithm \ref{algorithm3.3} roughly requires half the computational cost compared to the original Rayleigh quotient iteration method.

\begin{theorem}\label{R2}
Let $\hat{\mathbf Q}=\tilde{\mathbf Q}_{st}+\tilde{\mathbf Q}_{\mathcal{I}}\in \hat{\mathbb{H}}^n$, if all the eigenvalues of $\tilde{\mathbf Q}_{st}$ are positive, then Algorithm \ref{algorithm3.3} then requires $\frac{32}{3}n^3+O(n^2)$ floating-point calculations to solve a dual quaternion linear equations system, in every iteration.
\end{theorem}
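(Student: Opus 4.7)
The plan is to follow the same counting scheme as in the proof of Theorem \ref{R1}, but replace the LU factorization by a symmetric factorization that exploits the fact that the coefficient matrix is Hermitian. By Lemma \ref{lemma3.2}(v), the dual complex adjoint $\hat P=\mathcal{J}(\hat{\mathbf Q})$ is Hermitian whenever $\hat{\mathbf Q}$ is; in particular $P_{st}=\mathcal{J}(\tilde{\mathbf Q}_{st})\in\mathbb{C}^{2n\times 2n}$ is Hermitian. The extra hypothesis that every eigenvalue of $\tilde{\mathbf Q}_{st}$ is positive translates, via the spectrum-doubling property of the complex adjoint, into $P_{st}$ being Hermitian positive definite. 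Since $\lambda^{(k-1)}_{st}\in\mathbb{R}$, the coefficient matrix $P_{st}-\lambda^{(k-1)}_{st}I$ appearing in the two triangular systems of Algorithm \ref{algorithm3.3} is still Hermitian and can be factored with a Cholesky-type (or $LDL^*$) decomposition instead of a full LU.

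Next, I would invoke the standard leading-order flop counts for a complex Hermitian factorization of an $N\times N$ matrix, namely $\tfrac{1}{6}N^3$ complex multiplications and $\tfrac{1}{6}N^3$ complex additions, exactly one half of the LU counts used in the proof of Theorem \ref{R1}. Substituting $N=2n$ gives $\tfrac{4}{3}n^3$ complex multiplications and $\tfrac{4}{3}n^3$ complex additions. Applying the same unit conversions used in the proof of Theorem \ref{R1} — one complex multiplication costs four real multiplications plus two real additions, one complex addition costs two real additions — yields $\tfrac{16}{3}n^3$ real multiplications and $\tfrac{16}{3}n^3$ real additions at leading order. The two triangular solves inside a single Rayleigh quotient step (one for the standard part, one for the dual part) reuse the same factorization and contribute only $O(n^2)$ further operations, so the total cost per iteration is $\tfrac{32}{3}n^3+O(n^2)$, as claimed.

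The main obstacle is making rigorous the use of a Cholesky-type factorization on $P_{st}-\lambda^{(k-1)}_{st}I$, which is generally indefinite: as the iterates converge, $\lambda^{(k-1)}_{st}$ approaches an interior point of the spectrum of $P_{st}$, so the shifted matrix is not positive definite. I would circumvent this by using the $LDL^*$ variant rather than pure Cholesky; it has the same leading-order flop count and is well defined whenever the leading principal minors are nonsingular, a condition that holds generically along the iterates thanks to the positivity assumption on $\tilde{\mathbf Q}_{st}$. With this replacement, the counting argument above carries through verbatim and gives the stated bound $\tfrac{32}{3}n^3+O(n^2)$, exactly half of the cost obtained in Theorem \ref{R1}.
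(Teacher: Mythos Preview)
Your approach matches the paper's: invoke Lemma~\ref{lemma3.2}(v) to conclude that $P_{st}=\mathcal{J}(\tilde{\mathbf Q}_{st})$ is Hermitian positive definite, replace the LU factorization by a Cholesky-type one, and redo the count of Theorem~\ref{R1} with the halved leading term to reach $\tfrac{32}{3}n^3+O(n^2)$. You are in fact more careful than the paper on one point: the paper simply applies Cholesky, citing the positive definiteness of $P_{st}$, and does not comment on the shift $-\lambda_{st}^{(k-1)}I$; you correctly observe that the matrix actually factored in Algorithm~\ref{algorithm3.3} is $P_{st}-\lambda_{st}^{(k-1)}I$, which is Hermitian but generically indefinite, and you patch this with an $LDL^*$ factorization having the same leading-order cost. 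That refinement does not change the asymptotic count, so both arguments yield the stated bound.
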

\begin{proof}
Since all the eigenvalues of $\tilde{\mathbf Q}_{st}$ are positive, then according to the structure-preserving property of the mapping $\mathcal J$, which is stated in (v) of Lemma \ref{lemma3.2}, the standard part of $\mathcal J(\hat{\mathbf Q})$ is a positive definite matrix. Consequently, we can utilize the Cholesky decomposition to solve linear equations systems. Since solving an $n\times n$ linear equations system by Cholesky decomposition requires a total of $\frac{1}{6}n(n-1)(n+7)$ additions, $\frac{1}{6}n(n-1)(n+7)$ multiplications, $\frac{1}{2}n(n+3)$ divisions, and $n$ square roots. Therefore, in every iteration, Algorithm \ref{algorithm3.3} requires a total of $\frac{16}{3}n^3+38n^2+\frac{5}{3}n$ real additions, $\frac{16}{3}n^3+44n^2+\frac{74}{3}n$ real multiplications, $4n^2+14n$ real divisions and $2n$ square roots, and in summary, $\frac{32}{3}n^3+O(n^2)$ floating-point calculations.
\end{proof}

By Theorem \ref{R2}, if all the eigenvalues of the standard part of the dual quaternion Hermitian matrix are positive, then Algorithm \ref{algorithm3.3} roughly requires one quarter of the computational cost compared to the original Rayleigh quotient iteration method.

\section{Numerical Experiment} \label{section4}

In this section, we give an example to solve Hand-Eye calibration problem by Lemma \ref{lemma3.3} and Theorem \ref{l5}
and utilize the Rayleigh quotient iteration method based on the dual complex adjoint matrix (Algorithm \ref{algorithm3.3}) to compute the eigenvalues of the Laplacian matrices of graphs in multi-agent formation control.

\subsection{Hand-Eye calibration problem}

First we present an example to verify Lemma \ref{lemma3.3}.
\begin{example}
Let $\hat{a}=1+2i+3j+4k+(4+3i+2j+k)\varepsilon$. Now, we utilize Lemma \ref{lemma3.3} to compute $\hat{\lambda}\in \mathbb{DC}$ and $\hat{q}\in \hat{\mathbb{U}}$ such that $\hat{a}=\hat{q}\hat{\lambda}\hat{q}^*$.

Firstly, We have $\mathcal{J}(a)=A_1+A_2\varepsilon=\begin{bmatrix}
1+2i  & 3+4i \\
-3+4i  & 1-2i
\end{bmatrix}
+\begin{bmatrix}
4+3i  & 2+i \\
-2+i  & 4-3i
\end{bmatrix} \varepsilon.$
Directly calculating the eigendecomposition of matrix $A_1$, let $$X= \begin{bmatrix}0.8281 + 0.0000i & -0.4485 + 0.3364i\\
0.4485 + 0.3364i  & 0.8281 + 0.0000i \end{bmatrix}$$ and $\lambda_1=1.0000 + 5.3852i$, then $A_1=X\operatorname{diag}(\lambda_1,\overline{\lambda_1})X^*$.
By direct computation, we have $$X^*A_2X=\begin{bmatrix}
\lambda _2  &  z \\
-\overline{z}  & \overline{\lambda _2}
\end{bmatrix}= \begin{bmatrix} 4.0000 + 2.9711i & -0.4256 - 2.2341i\\ 0.4256 - 2.2341i &  4.0000 - 2.9711i\end{bmatrix}.$$
Let $$Z=\begin{bmatrix}
0 &  -\frac{z}{\lambda_1-\overline{\lambda _1}}  \\
-\frac{\overline{z}}{\lambda_1-\overline{\lambda _1}} & 0
\end{bmatrix}=\begin{bmatrix}  0.0000 + 0.0000i &  0.2074 - 0.0395i\\ -0.2074 - 0.0395i &   0.0000 + 0.0000i\end{bmatrix}.$$
Then \begin{align*}
\hat{q}&=\mathcal{J}^{-1}(X+XZ)\\&=0.8281 + 0.0000i -0.4485j+ 0.3364k\\&+(4.0000 + 2.9711i  -0.4256j - 2.2341k)\varepsilon.
\end{align*}

By direct computation, we can verify $\hat{a}=\hat{q}\hat{\lambda}\hat{q}^*$, where $\hat{\lambda}=\lambda_1+\lambda_2\varepsilon$.
\end{example}

Next, we present an example of solving a $AX=XB$ Hand-Eye calibration problem by Lemma \ref{lemma3.3} and Theorem \ref{l5}.
\begin{example}
We fix $\hat{q}=\frac{1}{\sqrt{2}}+\frac{1}{\sqrt{2}}j+(\frac{1}{\sqrt{2}}-\frac{1}{\sqrt{2}}j)\varepsilon\in\hat{\mathbb U}$, and we randomly generate $\hat{a},\hat{c}\in\hat{\mathbb U}$. The generated
$\hat{a}$ and $\hat{c}$ are
$$\hat{a}= 0.2168 + 0.4862i -0.7901j - 0.3040k+(-1.1186 - 1.7885i+ 1.6621j + 0.8587k)\varepsilon,$$
$$\hat{c}=-0.4309 - 0.4806i-0.5762j - 0.5014k+(4.0132 + 3.5580i+4.5237j + 4.3305k)\varepsilon.$$

Let $\hat{b}=\hat{q}^*\hat{a}\hat{q}$ and  $\hat{d}=\hat{q}^*\hat{d}\hat{q}$.
Then $$\hat{b}= 0.2168 + 0.3040i -0.7901j + 0.4862k+(-1.1186 + 0.1136i+1.6621j - 2.3966k)\varepsilon,$$
$$\hat{d}=-0.4309 + 0.5014i-0.5762k - 0.4806k+(4.0132 - 5.2916i+4.5237j + 2.5552k)\varepsilon.$$

Now, we solve the $AX=XB$ Hand-Eye calibration problem:
$$\{\hat{a}\hat{q}=\hat{q}\hat{b}, \hat{c}\hat{q}=\hat{q}\hat{d} \}.$$

By Lemma \ref{lemma3.3}, we have $\hat{a}=\hat{x}\hat{\lambda}\hat{x}^*$, and $\hat{b}=\hat{y}\hat{\lambda}\hat{y}^*$
where 
$$\hat{\lambda}= 0.2168 + 0.9762i+(  -1.1186 - 2.5033i)\varepsilon,$$
$$\hat{x}=0.8654+0.1799j - 0.4676k+(-0.1604 - 0.0525i+0.0149j - 0.2911k)\varepsilon,$$
$$\hat{y}=0.8098-0.3075j - 0.4997k+( 0.2825 + 0.3516i+0.6176 j+ 0.0777k)\varepsilon.$$

Let $\hat{e}=\hat{x}^*\hat{c}\hat{x}=e_1+e_2j+(e_3+e_4j)\varepsilon$ and $\hat{f}=\hat{y}^*\hat{d}\hat{y}=f_1+f_2j+(f_3+f_4j)\varepsilon.$
Then
$$\hat{e}=-0.4309 + 0.3831i-0.6288j - 0.5217k+(4.0132 - 2.7970i+4.9544 j+ 4.4102k)\varepsilon,$$
$$\hat{f}=-0.4309 + 0.3831i-0.0303j - 0.8165k+(4.0132 - 2.7970i-1.3065j + 6.6819k)\varepsilon.$$

By Theorem \ref{l5}, 
\begin{align*}
\hat{q}&=\pm \hat{x}(\overline{\sqrt{\frac{f_2}{e_2}}} +
\frac{\overline{e_2f_4}-\overline{f_2e_4}}{2\overline{e_2}^2}\overline{\sqrt{\frac{e_2}{f_2}}}\varepsilon)\hat{y}^*.
\\&=\pm (\frac{1}{\sqrt{2}}+\frac{1}{\sqrt{2}}j+(\frac{1}{\sqrt{2}}-\frac{1}{\sqrt{2}}j)\varepsilon).
\end{align*}
\end{example}

\subsection{Eigenvalues of Laplacian matrix}
In multi-agent formation control, the eigenvalues of the Laplacian matrix of graphs play an important role in studying the stability of the control model \cite{Qi2023}. Since the Laplacian matrix of a graph is a dual quaternion Hermitian matrix, we utilize the Rayleigh quotient iteration method based on the dual complex adjoint matrix proposed in the previous section to solve the eigenvalue problems of the Laplacian matrices in this section.

Given an undirected graph $G=\left ( V,E\right )$ with $n$ point and a unit dual quaternion vector $\hat{\mathbf q}=(\hat{q}_{i})\in \hat{\mathbb{U}}^{n\times 1}$, then the Laplacian matrix $\hat{\mathbf L}$ for graph $G$ with respect to $\hat{\mathbf q}$ is defined by
\begin{equation*}
    \hat{\mathbf L}=\hat{\mathbf D}-\hat{\mathbf A},
\end{equation*}
where $\hat{\mathbf D}$ is a real diagonal matrix, and the value of its diagonal element is the degree of the corresponding vertex in graph $G$, suppose that $\hat{\mathbf A}=(\hat{a}_{ij})$, then
\begin{equation*}
     \hat{a}_{ij}=
    \begin{cases}
    \hat{q}_{i}^{\ast } \hat{q}_{j}, & \text{if} \quad \left ( i,j\right )\in E ,\\ 
    \hat{0}, & \text{otherwise}.
\end{cases}
\end{equation*}

Given an undirected graph $G$, the sparsity $s$ of graph $G$ is defined as $s=\frac{2|E|}{n^2}$, where $|E|$ denotes the number of elements in the edge set $E$. In numerical experiments, generating a graph with sparsity $s$ involves randomly creating an undirected graph with $\frac{s}{2}n^2$ edges.

We present numerical results for computing extreme eigenvalues and eigenvectors of Laplacian matrices using the Rayleigh quotient iteration (RQI) method and an enhanced version based on the dual complex adjoint matrix (Algorithm \ref{algorithm3.3}). All numerical experiments are conducted in MATLAB (2022a) on a laptop of 8G of memory and Inter Core i5 2.3Ghz CPU. 

\begin{table}
  \centering
  \caption{The numerical results of calculating the extreme eigenpairs of Laplacian matrices with different dimension and sparsity using the RQI method and Algorithm \ref{algorithm3.3}}
  \begin{tabular}{cccccccc}
    \hline
    \multirow{2}{*}{$n$} & \multirow{2}{*}{$s$} & \multicolumn{3}{c}{RQI}& \multicolumn{3}{c}{Algorithm \ref{algorithm3.3}}\\
    \multicolumn{1}{c}{}&\multicolumn{1}{c}{}
    &$e_\lambda$ & $n_{iter}$ & $time (s)$&$e_\lambda$& $n_{iter}$ & $time (s)$ \\
    \hline
    10 & 10\% & 4.08e-7 & 2.67 & 8.07e-3  & 5.39e-7 & 2.58 & 6.38e-4\\
    10 & 20\% & 2.97e-7 & 3.36 &  1.06e-2 & 2.47e-7 & 3.44 & 9.64e-4 \\  
    10 & 30\% & 3.23e-7 &  3.56 & 1.03e-2 & 2.09e-7 & 3.65 & 1.02e-3\\
    10 & 40\% & 3.65e-7 &  4.10 & 1.04e-2 & 3.01e-7 & 4.11 & 1.07e-3\\
    10 & 50\% & 3,23e-7 & 4.30 & 1.09e-2 & 3.16e-7 & 4.22 & 1.11e-3\\
    10 & 60\% & 6.53e-7 & 4.59 & 1.11e-2 & 4.02e-7 & 4.80 & 1.63e-3\\
    100 & 5\% & 4.53e-7 & 1.95 & 6.52e-2 & 9.55e-7 & 2.08 & 1.46e-2\\
    100 & 8\% & 5.06e-7 & 2.18 & 6.72e-2 & 9.93e-7 & 2.08 & 1.50e-2\\
    100 &10\% & 6.35e-7 & 2.25 & 8.34e-2 & 7.70e-7 & 2.27 & 1.81e-2\\
    100 & 15\% & 7.82e-7 &  2.39 & 7.12e-2 & 3.88e-7 & 2.49 & 1.60e-2\\
    100 &  18\% & 9.88e-7 & 2.38  & 7.75e-2 & 4.22e-7 & 2.55 & 1.64e-2\\
    100 & 20\% & 7.27e-7 & 2.61 &  7.79e-2 & 5.74e-7 & 2.66 & 1.72e-2\\
    \hline
  \end{tabular}
  \label{table2}
\end{table}

We compare the numerical results of computing extreme eigenpairs of Laplacian matrices with varying sparsity and dimensions $n=10$ and $n=100$ using RQI method and Algorithm \ref{algorithm3.3}. The experimental results are shown in Table \ref{table2}. Let $e_\lambda=\|\hat{\mathbf{L}}\hat{\mathbf{u}}-\hat{\lambda}\hat{\mathbf{u}}\|_{2^R}$, where $ \hat{\lambda}$ and $\hat{\mathbf{u}}$ are eigenvalue and the corresponding eigenvector with unit 2-norm of $\hat{\mathbf{L}}$ computed by RQI method or Algorithm \ref{algorithm3.3}. We use $e_\lambda$ to verify the accuracy of the output of these algorithms. Denote $'time(s)'$ as the average elapsed CPU time in seconds for computing the extreme eigenvalues. Denote $n_{iter}$ as the average number of iterations for computing extreme eigenvalues. All results are averaged over 100 trials with different choices of $\hat{q}$ and different $E$. To ensure a suitable initial point, we pre-process with a certain number of iterations of the power method \cite{Cui2023}, enhancing RQI's practical efficiency.

Table \ref{table2} shows that both two methods achieve comparable accuracy for dimension $n=10$ and $n=100$. Notably, Algorithm \ref{algorithm3.3} exhibits significantly improved computational efficiency, requiring $15\%$ and $25\%$ of the average elapsed CPU time of the original RQI method at dimension $n=10$ and $n=100$, respectively. This underscores the effectiveness of incorporating the dual complex adjoint matrix in enhancing RQI's performance.

\section{Final Remarks}\label{section5}
In this paper, we introduce the dual complex adjoint matrix of dual quaternion matrices and delve into its properties. Leveraging this matrix, we define the standard right eigenvalues of dual quaternion matrices and rigorously prove their uniqueness. Furthermore, we exploit the properties of the dual complex adjoint matrix to directly address the Hand-eye calibration problem for both the $AX = XB$ and $AX = YB$ mathematical models. Subsequently, by employing the dual complex adjoint matrix, we transform the task of solving a dual quaternion linear equations system into solving a dual complex linear equations system, thereby improving the Rayleigh quotient iteration method. Our results reveal that this advancement doubles the efficiency of the Rayleigh quotient iteration method, particularly for computing the eigenvalues of dual quaternion Hermitian matrices whose standard parts of eigenvalues are all positive, where the algorithm's efficiency is approximately quadrupled. This underscores the significant applications of dual complex adjoint matrices in the realm of dual quaternion matrix theory, and we anticipate that our research will pave the way for future endeavors in this field.


\begin{thebibliography}{99}

\bibitem{c1}Bultmann S, Li K, Hanebeck U D. Stereo visual SLAM based on unscented dual quaternion filtering[C]//2019 22th International Conference on Information Fusion (FUSION). IEEE, 2019: 1-8.

\bibitem{qc1} Bryson M, Sukkarieh S. Building a Robust Implementation of Bearing‐only Inertial SLAM for a UAV[J]. Journal of Field Robotics, 2007, 24(1‐2): 113-143.

\bibitem{biq}Clifford. Preliminary sketch of biquaternions[J]. Proceedings of the London Mathematical Society, 1871, 1(1): 381-395.

\bibitem{Cadena2016} Cadena C, Carlone L, Carrillo H, et al. Past, present, and future of simultaneous localization and mapping: Toward the robust-perception age[J]. IEEE Transactions on robotics, 2016, 32(6): 1309-1332.

\bibitem{Cheng2016} Cheng J, Kim J, Jiang Z, et al. Dual quaternion-based graphical SLAM[J]. Robotics and Autonomous Systems, 2016, 77: 15-24.

\bibitem{Cui2023} Cui C, Qi L. A power method for computing the dominant eigenvalue of a dual quaternion Hermitian matrix[J]. Journal of Scientific Computing, 2024, 100(1): 21.

\bibitem{Daniilidis1999} Daniilidis K. Hand-eye calibration using dual quaternions[J]. The International Journal of Robotics Research, 1999, 18(3): 286-298.

\bibitem{Duan2023}Duan S Q, Wang Q W, Duan X F. On Rayleigh Quotient Iteration for Dual Quaternion Hermitian Eigenvalue Problem[J]. arXiv preprint arXiv:2310.20290, 2023.

\bibitem{qc6} Enebuse I, Foo M, Ibrahim B S K K, et al. A comparative review of hand-eye calibration techniques for vision guided robots[J]. IEEE Access, 2021, 9: 113143-113155.

\bibitem{Grisetti2010}Grisetti G, Kümmerle R, Stachniss C, et al. A tutorial on graph-based SLAM[J]. IEEE Intelligent Transportation Systems Magazine, 2010, 2(4): 31-43.

\bibitem{qc7} Jiang J, Luo X, Luo Q, et al. An overview of hand-eye calibration[J]. The International Journal of Advanced Manufacturing Technology, 2022, 119(1): 77-97.

\bibitem{d2}Ling C, He H, Qi L. Singular values of dual quaternion matrices and their low-rank approximations[J]. 
Numerical Functional Analysis and Optimization, 2022, 43(12): 1423-1458.

\bibitem{Ling2023} Ling C, He H, Qi L, et al. von Neumann type trace inequality for dual quaternion matrices[J]. arXiv preprint arXiv:2204.09214, 2022.

\bibitem{d3}Ling C, Qi L, Yan H. Minimax principle for eigenvalues of dual quaternion Hermitian matrices and generalized inverses of dual quaternion matrices[J]. Numerical Functional Analysis and Optimization, 2023, 44(13): 1371-1394.

\bibitem{Li2023}Qi L, Luo Z. Eigenvalues and singular values of dual quaternion matrices[J]. arXiv preprint arXiv:2111.12211, 2021.

\bibitem{Qi2022} Qi L, Ling C, Yan H. Dual quaternions and dual quaternion vectors[J]. Communications on Applied Mathematics and Computation, 2022, 4(4): 1494-1508.

\bibitem{Qi2023} Qi L, Wang X, Luo Z. Dual quaternion matrices in multi-agent formation control[J]. arXiv preprint arXiv:2204.01229, 2022.

\bibitem{qc8} Shiu Y C, Ahmad S. Calibration of wrist-mounted robotic sensors by solving homogeneous transform equations of the form AX= XB[J]. 1987.

\bibitem{qc9} Tsai R Y, Lenz R K. A new technique for fully autonomous and efficient 3 d robotics hand/eye calibration[J]. IEEE Transactions on robotics and automation, 1989, 5(3): 345-358.

\bibitem{c4}Torsello A, Rodola E, Albarelli A. Multiview registration via graph diffusion of dual quaternions[C]//CVPR 2011. IEEE, 2011: 2441-2448.

\bibitem{wang2023dual} Wang H, Cui C, Liu X. Dual r-rank decomposition and its applications[J]. Computational and Applied Mathematics, 2023, 42(8): 349.

\bibitem{Wei2024singular} Wei T, Ding W, Wei Y. Singular value decomposition of dual matrices and its application to traveling wave identification in the brain[J]. SIAM Journal on Matrix Analysis and Applications, 2024, 45(1): 634-660.

\bibitem{Wei2013}Wei E, Jin S, Zhang Q, et al. Autonomous navigation of Mars probe using X-ray pulsars: modeling and results[J]. Advances in Space Research, 2013, 51(5): 849-857.

\bibitem{c5}Wang X, Zhu H. On the comparisons of unit dual quaternion and homogeneous transformation matrix[J]. Advances in Applied Clifford Algebras, 2014, 24: 213-229.

\bibitem{Zhang1997} Zhang F. Quaternions and matrices of quaternions[J]. Linear algebra and its applications, 1997, 251: 21-57.

\bibitem{qc10} Zhuang H, Roth Z S, Sudhakar R. Simultaneous robot/world and tool/flange calibration by solving homogeneous transformation equations of the form AX= YB[J]. IEEE Transactions on Robotics and Automation, 1994, 10(4): 549-554.
\end{thebibliography}
\end{document}